\renewcommand{\(}{\left (}
\renewcommand{\)}{\right )}
\newcommand{\norm}[1]{\left\Vert#1\right\Vert}
\renewcommand{\phi}{\varphi}
\newcommand{\N}{\mathbb{N}}
\newcommand{\R}{\mathbb{R}}
\newcommand{\C}{\mathbb{C}}
\newcommand{\II}{\mathbb{I}}
\newcommand{\scalar}[2]{\left\langle#1,#2\right\rangle}
\newcommand{\co}{\colon}
\newcommand{\E}{\mathbb{E}}
\renewcommand{\P}{\mathbb{P}}
\newcommand{\mc}{\mathcal}
\newcommand{\be}{\begin{equation}}
\newcommand{\ee}{\end{equation}}
\newtheorem{theorem}{Theorem}
\newtheorem{lemma}[theorem]{Lemma}
\newtheorem{corollary}[theorem]{Corollary}
\newtheorem{prop}[theorem]{Proposition}
\theoremstyle{definition}
\newtheorem{remark}[theorem]{Remark}
\newtheorem{example}[theorem]{Example}
\DeclareMathOperator{\Id}{Id}
\DeclareMathOperator{\diag}{diag}
\DeclareMathOperator{\Span}{span}
\DeclareMathOperator{\Tr}{Tr}
\title[A sharp upper bound for sampling numbers]{
A sharp upper bound for \\sampling numbers in $\mathbf L_{\mathbf 2}$}
\author{
Matthieu Dolbeault$^1$
\and
David Krieg$^{2,3}$
\and
Mario Ullrich$^{2}$
}
\address{$^1$Sorbonne Universit\'e, Paris, France}
\address{$^2$Institut f\"ur Analysis, 
Johannes Kepler Universit\"at Linz, Austria}
\address{$^3$RICAM, 
Austrian Academy of Sciences, Linz, Austria.}
\email{
\quad\parbox{0.7\linewidth}{
matthieu.dolbeault@sorbonne-universite.fr,
david.krieg@jku.at, 
mario.ullrich@jku.at
}
}
\thanks{DK is supported by the Austrian Science Fund 
Project F5506, 
	which is part of the Special Research Program 
	\emph{Quasi-Monte Carlo Methods: Theory and Applications}.}
\keywords{$L_2$-approximation, 
information-based complexity,
least squares,
rate of convergence, 
random matrices, 
Kadison-Singer}
\subjclass[2010]{
41A25, 
41A45, 
46B09, 
46B15, 
60B20. 
}
\begin{document}

\begin{abstract}
For a class $F$ of complex-valued functions on a set~$D$, 
we denote by $g_n(F)$ its sampling numbers, i.e.,
the minimal worst-case error on $F$, 
measured in $L_2$, that can be achieved 
with a recovery algorithm based on $n$ function evaluations. 
We prove that there is a universal constant $c\in\N$ such that,
if $F$ is the unit ball of a separable reproducing kernel Hilbert space,
then
\[
g_{cn}(F)^2 \,\le\, \frac{1}{n}\sum_{k\geq n} d_k(F)^2,
\]
where $d_k(F)$ are the Kolmogorov widths (or approximation numbers) 
of $F$ in $L_2$.
We also obtain similar upper bounds 
for more general classes $F$, including all compact subsets of 
the space of continuous functions on a bounded domain $D\subset \R^d$, 
and 
show that these bounds are sharp by providing examples 
where the converse inequality holds
up to a constant.
The results rely on the solution to the Kadison-Singer problem, 
which we extend to the subsampling of a sum of infinite rank-one matrices.
\end{abstract}

\maketitle

\section{Introduction and main results} 

The general question of how well point-wise 
evaluations perform for approximating a function, 
which is often called \emph{sampling recovery} or 
approximation using \emph{standard information}, is 
a classical question in theoretical and applied mathematics.
A historical treatment and various basics may be found in
the monographs \cite{CS,DL, DTU, Tem18, Wen} for general approximation theory and in \cite{NW1,NW2,NW3} for information-based complexity.
It is of particular interest to compare the \emph{power of function evaluations}
with the power of optimal linear measurements (which could be Fourier coefficients or derivatives), 
since the latter are well understood in many cases 
and easier to handle from a theoretical point of view, 
while the first are of larger practical relevance.
The quest for a systematic comparison 
has attracted much attention recently. 
We will describe the history and related results below 
after presenting the setting and the main results, 
see also Section~\ref{biblio}.

The \emph{power} of a given class of measurements 
is often expressed in terms of the minimal error achievable 
with a given amount of such information. 
Here, we consider $L_2$-approximation in a worst-case setting, 
so that these minimal errors correspond to sampling numbers 
and Kolmogorov (or approximation) numbers, as we summarize below.

Let $(D,\mc A, \mu)$ be a measure space and
$L_2:=L_2(D,\mc A, \mu)$ be the space of square-integrable 
complex-valued functions on $D$. 
Let $F$ be a set of functions contained in~$L_2$.
The \emph{Kolmogorov widths} of $F$ in $L_2$ are defined by
\[
d_k(F)\,:=\,\inf_{\substack{\ell_1,\dots,\ell_k\colon F \to \C \\ \varphi_1,\dots,\varphi_k\in L_2}}\, 
\sup_{f\in F}\, 
\Big\|f - \sum_{i=1}^k \ell_i(f)\, \varphi_i\Big\|_{L_2}.
\]
This is the worst-case error of an optimal approximation 
within a linear space of dimension $k$. 
It coincides with the $k$th approximation number (or linear width) of $F$,
which is the worst-case error of an optimal linear algorithm that uses 
at most $k$ linear functionals as information,
see Remark~\ref{rem:Kolmogorov}.
On the other hand, the \emph{sampling numbers} are given by
\[
g_n(F) \,:=\,
\inf_{\substack{x_1,\dots,x_n\in D\\ \varphi_1,\dots,\varphi_n\in L_2}}\, 
\sup_{f\in F}\, 
\Big\|f - \sum_{i=1}^n f(x_i)\, \varphi_i\Big\|_{L_2},
\]
i.e., $g_n(F)$ is
the minimal worst-case error of linear algorithms based on $n$ function evaluations.
Therefore, the task is to compare the numbers $d_k(F)$ and $g_n(F)$.

It is clear that we have $g_n(F)\ge d_n(F)$. 
Here, we aim 
for an upper bound of $g_n(F)$
in terms of the numbers $d_k(F)$.
We first describe the situation
where $F$ is the unit ball of a separable reproducing kernel Hilbert space (RKHS).
A priori, it is not clear whether such a bound is even possible.
And indeed, there can be no such bound in the case that 
$(d_k(F))\notin\ell_2$.
More precisely, it is shown in \cite{HNV} that for any non-negative 
and non-increasing sequence 
$(\sigma_k)\notin\ell_2$
and any sequence $(\tau_n)$ tending to infinity, e.g.\ $\tau_n=\log \log n$, 
there exists a RKHS
with unit ball $F$ such that $d_k(F)=\sigma_k$ for all $k$ but 
$\limsup_{n\to\infty} \tau_n\cdot g_n(F)>0$. 

The situation is completely different when $(d_k(F))\in\ell_2$,
which is equivalent 
to assuming that the kernel $K$ of the Hilbert space has finite trace
\begin{equation}\label{eq:trace}
\int_DK(x,x)\,d\mu(x)<\infty,
\end{equation}
see, e.g.,~\cite{MU}.
Under this assumption, first upper bounds on $g_n(F)$ in terms of 
the numbers $d_k(F)$ were obtained more than 20 years ago in~\cite{WW01}.
These upper bounds were later improved in \cite{KU1,KWW,NSU}. 
On the other hand, a lower bound from \cite[Theorem~2]{HNKV}
tells us how far these improvements might go:
for every non-negative and non-increasing $(\sigma_k)\in\ell_2$, 
there exists a separable RKHS
with unit ball~$F$ such that $d_k(F)=\sigma_k$ for all $k\in\N$ and
\begin{equation}\label{eq:lower}
g_{\lfloor m/8\rfloor}(F) \,\geq\, \sqrt{\frac{1}{m}\sum_{k\geq m}d_k(F)^2}
\end{equation}
for infinitely many values of $m\in\N$.  
And indeed, it turns out that this is already the worst possible scenario.
The main result of this paper is
an upper bound, which matches the above lower bound \eqref{eq:lower} 
up to a universal constant,
and which is true for any separable reproducing kernel Hilbert space.

\begin{theorem}
\label{main_theorem}
There is a universal constant $c\in\N$ such that the following holds.
Let $\mu$ be a measure on a set $D$ 
and let $F\subset L_2(\mu)$ be the unit ball of a separable RKHS on $D$   
such that the finite trace assumption~\eqref{eq:trace} holds.
Then, for all $m\in\N$, we have
\begin{equation*}
g_{cm}(F) \,\le\, \sqrt{\frac{1}{m}\sum_{k\geq m}d_k(F)^2}.
\end{equation*}
\end{theorem}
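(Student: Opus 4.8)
The plan is to reduce the statement to a finite-dimensional subsampling problem and then apply the Kadison--Singer-type theorem mentioned in the abstract. First I would fix $m$ and use the spectral theory of the kernel $K$: since the trace is finite, the integral operator $T_K\colon L_2(\mu)\to L_2(\mu)$ associated with $K$ is trace class, with eigenvalues $\sigma_1^2\ge\sigma_2^2\ge\dots$ and orthonormal eigenfunctions $(e_k)$ in $L_2(\mu)$, and one has $d_k(F)^2=\sigma_{k+1}^2$. Mercer-type expansions give $K(x,y)=\sum_k \sigma_k^2\, e_k(x)\overline{e_k(y)}$ in a suitable sense, and the functions $\varphi_k:=\sigma_k e_k$ form an orthonormal system whose evaluations control point values: $f(x)=\sum_k \scalar{f}{\varphi_k}_H\,\sigma_k e_k(x)$ for $f$ in the RKHS $H$. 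The key quantity is the infinite rank-one decomposition
\[
\sum_{k\ge 1}\sigma_k^2\,\overline{e_k(x)}\,e_k(x)\;=\;K(x,x),
\]
whose integral is the trace. The idea is to split the index set into the first block $k<m$ (a genuinely $(m-1)$-dimensional part, handled by any exact/near-exact reconstruction on a well-chosen point set) and the tail $k\ge m$, which carries the error $\frac1m\sum_{k\ge m}\sigma_k^2$ and must be controlled after subsampling.

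The core step is to choose points $x_1,\dots,x_{cm}\in D$ and weights so that the empirical frame inequality
\[
\frac{1}{2}\,\|g\|_{L_2}^2\;\le\;\sum_{i=1}^{cm} w_i\,|g(x_i)|^2\;\le\;\frac{3}{2}\,\|g\|_{L_2}^2
\]
holds for all $g$ in the span $V_m:=\Span\{e_1,\dots,e_{m-1}\}$, while \emph{simultaneously} the tail energy is controlled, i.e.\ $\sum_{i=1}^{cm} w_i \sum_{k\ge m}\sigma_k^2|e_k(x_i)|^2 \lesssim \sum_{k\ge m}\sigma_k^2$. This is exactly where the extension of the Kadison--Singer / Marcus--Spielman--Srivastava theorem to sums of infinitely many rank-one matrices enters: starting from the decomposition of the identity on $V_m$ coming from the measure $\mu$ (together with the tail as an extra "rank-one mass" to be kept small on average), one draws points at random with the appropriate density and then applies the subsampling theorem to extract $O(m)$ of them preserving the norm equivalence on $V_m$ and not blowing up the tail. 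With such a point set in hand, the recovery algorithm is weighted least squares: given $f\in F$, let $S_m f:=\sum_{k<m}\scalar{f}{e_k}_{L_2} e_k$ be the truncation, approximate $S_m f$ by the weighted least-squares fit $A f\in V_m$ from the data $(f(x_i))$, and output $Af$.

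For the error analysis I would write $\|f-Af\|_{L_2}\le \|S_m f-Af\|_{L_2}+\|f-S_m f\|_{L_2}$. The second term is at most $\sigma_m=d_{m-1}(F)\le\sqrt{\frac1m\sum_{k\ge m}\sigma_k^2}\cdot\sqrt{?}$ — actually I would instead bound $\|f-S_mf\|_{L_2}^2=\sum_{k\ge m}\sigma_k^2|\scalar{f}{\varphi_k}_H|^2\le \sigma_m^2$, but since $\sigma_m^2\le\frac1m\sum_{k=1}^{m}\sigma_k^2$ is the wrong direction, the cleaner route is to absorb this term into the tail via the standard trick of replacing the hard truncation at $m$ by a truncation at $\lceil m/2\rceil$ and using monotonicity, giving $\sum_{k\ge \lceil m/2\rceil}\sigma_k^2\le 2\sum_{k\ge m}\sigma_k^2$ plus the genuinely small head; this costs only a factor in $c$. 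For the first term, the lower frame bound on $V_m$ makes the least-squares map well conditioned, so $\|S_mf-Af\|_{L_2}^2\lesssim \sum_i w_i|f(x_i)-S_mf(x_i)|^2=\sum_i w_i|(f-S_mf)(x_i)|^2$, and the tail-energy control from the subsampling step bounds the right-hand side by $\lesssim\|f-S_mf\|_{H}^2\cdot\big(\frac1m\sum_{k\ge m}\sigma_k^2\big)\le \frac1m\sum_{k\ge m}\sigma_k^2$ since $f\in F$. Combining, $g_{cm}(F)^2\le \frac1m\sum_{k\ge m}\sigma_k^2=\frac1m\sum_{k\ge m}d_{k-1}(F)^2$, and a harmless index shift gives the stated bound. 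The main obstacle — and the technical heart of the paper — is the infinite-rank-one subsampling theorem: one must show that the Marcus--Spielman--Srivastava argument survives when the decomposition of the identity is augmented by an infinite tail of small-norm operators, so that a linear-in-$m$ number of points can be chosen that \emph{both} stabilize the $m$-dimensional head \emph{and} keep the tail energy comparable to its expectation; everything else is a fairly standard weighted least-squares estimate.
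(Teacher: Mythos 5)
Your proposal follows essentially the same route as the paper: spectral decomposition of the embedding, a two-block density mixing the head $V_m$ and the tail, random points followed by a Kadison--Singer subsampling that simultaneously preserves a lower frame bound on $V_m$ and an upper bound on the tail energy, weighted least squares, and the $m\mapsto\lceil m/2\rceil$ shift to absorb $\sigma_m^2$ into the tail sum. The one ingredient you assert rather than prove---that the Marcus--Spielman--Srivastava subsampling survives the infinite tail---is exactly the paper's Proposition~\ref{prop:subsampling}, which it establishes by projecting the tails onto their (finite-dimensional) span, artificially completing the empirical matrix to an approximate identity with extra rank-one terms supported on the tail block, applying the iterated Weaver lemma, and then discarding the artificial vectors; you have correctly isolated this as the technical heart, but it is the genuinely new content and would need to be supplied.
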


This settles the question on the power of standard information 
compared to general linear information
for the problem of $L_2$-approximation on Hilbert spaces,
and solves the open problems from \cite{HNKV, KU1}, 
Open Problem 140 in \cite{NW3},
as well as Outstanding Open Problem~1.4 in~\cite{DTU} for $L_2$-approximation.
The latter is discussed in Example~\ref{ex:tp}, where 
we consider tensor product spaces.
We note that the case of $L_p$-approxi\-ma\-tion ($p\neq2$) 
is widely open. A slightly stronger version of Theorem~\ref{main_theorem}
and explicit constants are given in Theorem~\ref{main_theorem_local}.

Let us add that, in principle, 
Theorem~\ref{main_theorem} does
only imply the \emph{existence} of (linear) sampling algorithms 
achieving the error bound.
However, 
all upper bounds on $g_n(F)$ will be obtained by a suitable 
(unregularized)
\emph{least squares method}, see Remark~\ref{rem:alg} and Section~\ref{sec:proof}.

Theorem~\ref{main_theorem} is a direct continuation of the series of 
works initiated in \cite{KU1}, 
in which the sampling numbers were bounded by 
\[
g_{\lfloor c\,m \log m\rfloor}(F) \,\le\, \sqrt{\frac{1}{m}\sum_{k\geq m}d_k(F)^2 },
\]
see also \cite{KUV,U2020}, 
and an improvement from \cite{NSU}, where the 
logarithmic oversampling was removed
in exchange for an additional factor $\sqrt{\log m}$ on the right hand side.

The ingredients for the proof are still the existence of
good point sets with 
$\mathcal O(m\log m)$ points from~\cite{KU1}, 
and a subsampling of $\mathcal O(m)$ points based on
the solution to the Kadison-Singer problem \cite{MSS}.
The Kadison-Singer subsampling 
has already been applied for the related problem
of sampling discretization in \cite{LT}
(see \cite{KKLT} for a survey) 
and was subsequently introduced to the study of sampling numbers 
in \cite{NSU,T20}.
In these papers,
the subsampling was, roughly speaking, 
only performed for a finite-dimensional sub-problem 
which resulted in the excessive factor $\sqrt{\log m}$ in \cite{NSU}.
The new ingredient here is an infinite-dimensional version 
of the subsampling theorem that might be of independent interest, 
see Proposition~\ref{prop:subsampling}.

If we apply Theorem~\ref{main_theorem}
and the lower bound from \cite{HNKV} to 
sequences with polynomial decay, we obtain the following 
characterization.

\begin{corollary}\label{cor:Hilbert}
Let $F$ be the unit ball of a 
separable RKHS with 
\begin{equation}\label{eq:UBd}
d_n(F) \,\lesssim\, n^{-\alpha} \log^{-\beta} n
\end{equation}
for some $\alpha\ge 1/2$, $\beta\in\R$ and $c>0$. Then
\begin{equation}\label{eq:UBg}
g_n(F) \,\lesssim\, \begin{cases} n^{-\alpha} \log^{-\beta} n
\quad &\text{if}\ \alpha>1/2,\\[2pt]
n^{-\alpha} \log^{-\beta+1/2} n
\quad &\text{if}\ \alpha=1/2 \text{ and } \beta>1/2.
\end{cases}
\end{equation}
Moreover, there exist classes $F$ such that these bounds are sharp. 
\end{corollary}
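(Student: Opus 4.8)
The plan is to split the statement into its two halves: the upper bound on $g_n(F)$, which follows from Theorem~\ref{main_theorem}, and the sharpness, which follows from the lower bound \eqref{eq:lower} of \cite{HNKV}. For the upper bound, I would fix $m\in\N$ and estimate the tail sum $\sum_{k\ge m} d_k(F)^2$ using the hypothesis \eqref{eq:UBd}. When $\alpha>1/2$ the summand is $\lesssim k^{-2\alpha}\log^{-2\beta}k$, and comparing with the integral $\int_m^\infty t^{-2\alpha}\log^{-2\beta}t\,dt$ gives a tail of order $m^{1-2\alpha}\log^{-2\beta}m$ (the logarithmic factor in the integrand is essentially constant on the relevant scale, so it just passes through — this is the routine part). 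Hence $\frac1m\sum_{k\ge m}d_k(F)^2\lesssim m^{-2\alpha}\log^{-2\beta}m$, and taking square roots and using $g_{cm}(F)\le\sqrt{\frac1m\sum_{k\ge m}d_k(F)^2}$ gives $g_{cm}(F)\lesssim m^{-\alpha}\log^{-\beta}m$; replacing $cm$ by $n$ (and absorbing the constant $c$, which only changes the implicit constant since $\log(n/c)\asymp\log n$) yields the first line of \eqref{eq:UBg}. In the boundary case $\alpha=1/2$, $\beta>1/2$, the integral $\int_m^\infty t^{-1}\log^{-2\beta}t\,dt$ is of order $\log^{1-2\beta}m$, so $\frac1m\sum_{k\ge m}d_k(F)^2\lesssim m^{-1}\log^{1-2\beta}m$ and the square root is $\lesssim n^{-1/2}\log^{1/2-\beta}n$, which is the second line.

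For sharpness, I would apply \eqref{eq:lower} to the sequence $\sigma_k:=k^{-\alpha}\log^{-\beta}k$ (suitably modified for small $k$ to make it non-negative and non-increasing, which does not affect the asymptotics), obtaining a separable RKHS with unit ball $F$ such that $d_k(F)=\sigma_k$ for all $k$ and $g_{\lfloor m/8\rfloor}(F)\ge\sqrt{\frac1m\sum_{k\ge m}\sigma_k^2}$ for infinitely many $m$. The same tail estimate as above — now as a lower bound, again by integral comparison — shows $\sqrt{\frac1m\sum_{k\ge m}\sigma_k^2}\gtrsim m^{-\alpha}\log^{-\beta}m$ when $\alpha>1/2$, and $\gtrsim m^{-1/2}\log^{1/2-\beta}m$ when $\alpha=1/2,\ \beta>1/2$. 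Writing $n=\lfloor m/8\rfloor$ and noting $m\asymp n$, $\log m\asymp\log n$, we get $g_n(F)\gtrsim n^{-\alpha}\log^{-\beta}n$ (resp.\ $n^{-1/2}\log^{1/2-\beta}n$) for infinitely many $n$, which, combined with the monotonicity of $g_n$, matches \eqref{eq:UBg} up to a constant. Thus for this $F$ one has $g_n(F)\asymp$ the right-hand side of \eqref{eq:UBg}, establishing sharpness.

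The only mild subtlety — and the one place I would be careful rather than routine — is the behaviour of the logarithmic factors under the two-sided integral comparison and under the reindexing $n\leftrightarrow cm$, $n\leftrightarrow\lfloor m/8\rfloor$: one must check that $\log^{-\beta}$ with a negative exponent (i.e.\ $\beta<0$) still behaves well, that $\log(n/c)\asymp\log n$ and $\log(8n)\asymp\log n$ absorb harmlessly into the implicit constants, and that in the case $\alpha=1/2$ the constraint $\beta>1/2$ is exactly what makes $(\sigma_k)\in\ell_2$, so that \eqref{eq:lower} and \eqref{eq:trace} are applicable. None of this is deep, but it is where the stated ranges of $\alpha,\beta$ come from, so it deserves an explicit line.
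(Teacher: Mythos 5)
Your proposal is correct and follows essentially the same route as the paper: the upper bound is Theorem~\ref{main_theorem} combined with the elementary tail estimate $\frac1n\sum_{k\ge n}k^{-2\alpha}\log^{-2\beta}k\lesssim n^{-2\alpha}\log^{-2\beta}n$ (resp.\ $n^{-1}\log^{1-2\beta}n$ when $\alpha=1/2$, $\beta>1/2$), and sharpness comes from applying the lower bound \eqref{eq:lower} of \cite{HNKV} to $\sigma_k\asymp k^{-\alpha}\log^{-\beta}k$. The paper states the tail estimate without proof; your integral-comparison details and the remarks on reindexing and on $\beta>1/2$ being exactly the $\ell_2$ condition are the intended (routine) fill-ins.
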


Here, $a_n \lesssim b_n$ means that there is a constant $c>0$
such that $a_n \le c b_n$ for all but finitely many 
$n\in \N$; later we will also use 
the symbols $\gtrsim$ and $\asymp$ which are defined accordingly.
It is clear from Theorem~\ref{main_theorem}
that the hidden constant in \eqref{eq:UBg} 
is given by the product of the hidden constant in \eqref{eq:UBd} 
and a constant that only depends on $\alpha$ and $\beta$.

We now turn to general function classes $F$
that are assumed to satisfy the following assumption. 
\hypertarget{assum}{}

\noindent\textbf{Assumption A.} \ Let $F$ be a class of
complex-valued functions on a set $D$
and let $\mu$ be a measure on $D$.
We say that $F$ and $\mu$ satisfy Assumption~A, 
if there is a metric on $F$ such that $F$ is
continuously embedded into $L_2$, separable,
and function evaluation $f\mapsto f(x)$ is, for each $x\in D$,
continuous on $F$.

Note that \hyperlink{assum}{Assumption~A} is satisfied, for example, if
\begin{itemize}
\item $F$ is a separable subset of the space of bounded functions 
equipped with the maximum distance and the measure $\mu$ is finite, \textbf{or}
\item $F$ is the unit ball of a separable normed space that is continuously embedded in $L_2$ and on which function evaluation at each point is a continuous functional, \textbf{or}
\item $F$ is a countable set of square-integrable functions, equipped with the discrete metric.
\end{itemize}
In this setting, we prove the following bound. 

\begin{theorem}
\label{thm:non-Hilbert}
Let $0<p<2$. 
There is a constant $c_p\in\N$,
depending only on $p$, 
such that for any 
$F$
and 
$\mu$ that satisfy \hyperlink{assum}{Assumption~A}
and all $m\in \N$,
 \[
 g_{c_p m}(F) \,\le\, 
 \bigg(\frac{1}{m} \sum_{k\geq m} d_k(F)^p\bigg)^{1/p} .
 \]
\end{theorem}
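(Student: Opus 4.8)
The plan is to reduce the general case covered by Assumption~A to the Hilbert space case handled in Theorem~\ref{main_theorem}, by manufacturing an auxiliary RKHS that dominates $F$ and whose Kolmogorov widths are controlled by those of $F$. Fix $m$ and let $k\mapsto d_k:=d_k(F)$ be the Kolmogorov widths. The first step is to choose, for each dyadic block $j\ge j_0$ (with $2^{j_0}\asymp m$), a near-optimal approximating subspace $V_j\subset L_2$ of dimension $2^j$ so that $\sup_{f\in F}\dist_{L_2}(f,V_j)\le 2\,d_{2^j}$. Using the telescoping decomposition $f = P_{j_0}f + \sum_{j>j_0}(P_j f - P_{j-1}f)$ with $P_j$ the orthogonal projection onto $V_j$ (and controlling the operator norms of point evaluations on each $V_j$ via Assumption~A, which guarantees they are finite), I would build a single Hilbert space $H$ continuously embedded in $L_2$, containing $F$ in a ball of bounded radius, by assigning geometrically increasing weights $w_j$ to the blocks. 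The weights $w_j$ must be chosen so that (i) $F$ lies in the unit ball of $H$ up to a constant, and (ii) the widths of the unit ball $B_H$ satisfy $d_{2^j}(B_H)\lesssim 2^{-j/p}\big(\sum_{i\ge j} 2^{i(1-2/p)} d_{2^i}^2\big)^{1/2}$ or something of comparable strength — the point being that the $\ell_p$-quasi-norm of $(d_k(F))$ on the tail dominates the relevant $\ell_2$-type quantity for $B_H$.

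The second step is purely a sequence-space computation: one must verify that if $(d_k)$ is non-increasing, then the tail sum $\frac1m\sum_{k\ge m} d_k(B_H)^2$, evaluated for the constructed Hilbert space, is bounded by $\big(\frac1m\sum_{k\ge m} d_k^p\big)^{2/p}$ up to a constant depending only on $p$. This is where the hypothesis $p<2$ is essential: the weights $w_j$ can be chosen with a genuine geometric gap precisely because $p<2$ leaves room between the $\ell_p$ and $\ell_2$ scales, and a Hölder/Jensen argument on dyadic blocks converts the $\ell_2$ tail of the widths of $B_H$ into the $\ell_p$ tail of the widths of $F$. Combining with Theorem~\ref{main_theorem} applied to $B_H$ then yields $g_{cm}(F)\le g_{cm}(B_H)\le \sqrt{\frac1m\sum_{k\ge m}d_k(B_H)^2}\le \big(\frac1m\sum_{k\ge m}d_k^p\big)^{1/p}$, after absorbing constants into $c_p$, since $g_n$ is monotone under inclusion of function classes (the same sampling algorithm works).

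The main obstacle I anticipate is the construction of the dominating RKHS under the weak Assumption~A: unlike in the Hilbert setting, $F$ carries no inner product and need not even be convex, so I cannot simply renorm an existing space. I expect the right device is to pass through the sequence of finite-dimensional spaces $V_j$, embed $L_2$-approximants of $F$ into a weighted $\ell_2$-direct sum $H=\bigoplus_j w_j^{-1} V_j$ via the "difference operators" $f\mapsto (P_j f - P_{j-1}f)_j$, and check that this map is well-defined and bounded on $F$ using the continuity of point evaluations (to control errors made by replacing $f$ by its projections when reconstructing point values) — this is the only place Assumption~A enters. A secondary technical point is ensuring $H$ is a genuine RKHS with finite-trace kernel, which follows from $(d_k(B_H))\in\ell_2$ by the characterization cited after \eqref{eq:trace}, so this reduces to the sequence estimate above. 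The separability of $H$ is inherited from that of $F$ and the separability of $L_2$.
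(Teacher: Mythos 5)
There is a genuine gap, and it sits exactly at the step you identify as the conclusion of the argument: passing through the \emph{global} bound of Theorem~\ref{main_theorem} applied to a ball $B_H$ (or $RB_H$) containing $F$. Kolmogorov widths are monotone under inclusion, so $F\subseteq RB_H$ forces $d_k(RB_H)\ge d_k(F)$ for all $k$, and hence the bound your chain produces, $g_{cm}(F)\le \big(\frac1m\sum_{k\ge m}d_k(RB_H)^2\big)^{1/2}$, is always at least $\big(\frac1m\sum_{k\ge m}d_k(F)^2\big)^{1/2}$. This quantity can exceed $\big(\frac1m\sum_{k\ge m}d_k(F)^p\big)^{1/p}$ by an unbounded factor: take $F$ to be the unit ball of an $(m+1)$-dimensional subspace spanned by bounded orthonormal functions (this satisfies Assumption~A), so that $d_k(F)=1$ for $k\le m$ and $d_k(F)=0$ afterwards; then $\big(\frac1m\sum_{k\ge m}d_k(F)^2\big)^{1/2}\asymp m^{-1/2}$ while $\big(\frac1m\sum_{k\ge m}d_k(F)^p\big)^{1/p}\asymp m^{-1/p}\ll m^{-1/2}$ since $p<2$. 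No choice of weights $w_j$ or radius $R$ can repair this, because the obstruction follows from the inclusion $F\subseteq RB_H$ alone; in particular your requirement (ii) on $d_{2^j}(B_H)$ is incompatible with requirement (i) in general.

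The missing idea is the local refinement, Theorem~\ref{main_theorem_local}: the weighted least-squares error on the auxiliary RKHS is bounded by $433\,\gamma_m^2\,\Vert f-P_mf\Vert_H^2$ with $\gamma_m^2\lesssim m^{-2\alpha}$, rather than by $\gamma_m^2\Vert f\Vert_H^2$, so one never needs $F$ to lie in a bounded ball of $H$ and never needs to control $d_k(B_H)$ at all. The paper takes the orthonormal system of Lemma~\ref{lem:basis} (essentially your nested near-optimal subspaces), weights it by $\sigma_k=k^{-\alpha}$ with $1/2<\alpha<1/p$ (this is where $p<2$ enters: the weights must be square-summable for the finite-trace condition while $2\alpha-2/p<0$ makes a dyadic-block sum converge), and proves $\Vert f-P_mf\Vert_H^2\lesssim m^{2\alpha}\big(\frac1m\sum_{k\ge m/8}d_k(F)^p\big)^{2/p}$; the factors $m^{\pm 2\alpha}$ cancel against $\gamma_m^2$, which is how the tail $\ell_p$ quantity emerges without ever being compared to an $\ell_2$ tail of widths. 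A secondary point you gloss over: elements of $V_j\subset L_2$ are equivalence classes, so ``operator norms of point evaluations on $V_j$'' are not meaningful as stated; the paper fixes representers, restricts to a countable dense subset $F_0\subset F$, and invokes the Rademacher--Menchov theorem to obtain a full-measure set $D_0$ on which the Fourier series converge pointwise, and only then defines the RKHS on $D_0$, using the continuity assumptions of Assumption~A at the very end to pass from $F_0$ back to $F$.
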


Theorem~\ref{thm:non-Hilbert} is an improvement over \cite{KU2}, 
where again we removed the excessive logarithmic factor.
We will also show that the result is not true for $p= 2$,
see Example~\ref{ex2}.
However, we provide a variant of Theorem~\ref{thm:non-Hilbert} under 
the weaker condition $\big((\log k)^s d_k(F)\big)\in\ell_2$ for some $s>1/2$ 
in Section~\ref{sec:boundary}.
This leads to the following corollary.

\begin{corollary}\label{cor:non-Hilbert}
Let $F$ and $\mu$ satisfy \hyperlink{assum}{Assumption~A} and 
\begin{equation}\label{eq:UBd2}
d_n(F) \,\lesssim\, n^{-\alpha} \log^{-\beta} n 
\end{equation}
for some $\alpha> 0$ and $\beta\in\R$. 
Then
\begin{equation}\label{eq:UBg2}
g_n(F) \,\lesssim\, \begin{cases} n^{-\alpha} \log^{-\beta} n
\quad &\text{if}\ \alpha>1/2,\\[2pt]
n^{-\alpha} \log^{-\beta+1} n
\quad &\text{if}\ \alpha=1/2 \text{ and } \beta>1,\\[2pt]
1 
\quad &\text{otherwise}.
\end{cases}
\end{equation}
Moreover, there exist classes $F$ such that these bounds are sharp.
\end{corollary}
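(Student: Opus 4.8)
The plan is to derive the three cases of \eqref{eq:UBg2} by feeding the decay hypothesis \eqref{eq:UBd2} into Theorem~\ref{thm:non-Hilbert} (for the range $\alpha>1/2$) and into the stated variant from Section~\ref{sec:boundary} under the condition $\big((\log k)^s d_k(F)\big)\in\ell_2$ (for the boundary case $\alpha=1/2$), and then to treat the trivial remaining case. First I would note the elementary monotonicity $g_{n}(F)\le g_{\lfloor n/c_p\rfloor}(F)$ in reverse: applying Theorem~\ref{thm:non-Hilbert} with $m=\lfloor n/c_p\rfloor$ gives $g_n(F)\le g_{c_p m}(F)\le\big(\tfrac1m\sum_{k\ge m}d_k(F)^p\big)^{1/p}$, so up to adjusting constants it suffices to estimate the tail sum $\tfrac1m\sum_{k\ge m}d_k(F)^p$ for a suitable fixed $p=p(\alpha)<2$. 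For $\alpha>1/2$, pick any $p\in(1/\alpha,2)$; then $k^{-\alpha p}\log^{-\beta p}k$ is summable and, by comparison with the integral $\int_m^\infty t^{-\alpha p}\log^{-\beta p}t\,dt \asymp m^{1-\alpha p}\log^{-\beta p}m$, the tail is $\asymp m^{1-\alpha p}\log^{-\beta p}m$, so $\tfrac1m\sum_{k\ge m}d_k(F)^p \asymp m^{-\alpha p}\log^{-\beta p}m$ and taking the $p$-th root yields $g_n(F)\lesssim m^{-\alpha}\log^{-\beta}m \asymp n^{-\alpha}\log^{-\beta}n$, which is the first case. The hidden constant, as the sentence after Corollary~\ref{cor:Hilbert} indicates, is the product of the constant from \eqref{eq:UBd2}, the constant $c_p$ from Theorem~\ref{thm:non-Hilbert}, and a constant depending only on $\alpha,\beta,p$.

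For the boundary case $\alpha=1/2$, $\beta>1$, Theorem~\ref{thm:non-Hilbert} with $p<2$ is too lossy (it would only give $n^{-1/2}\log^{-\beta+1/p}n$ with $1/p>1/2$), so instead I would invoke the variant from Section~\ref{sec:boundary}: the hypothesis $d_k(F)\lesssim k^{-1/2}\log^{-\beta}k$ with $\beta>1$ implies $\big((\log k)^s d_k(F)\big)\in\ell_2$ for any $s$ with $1/2<s<\beta-1/2$, since then $(\log k)^{2s-2\beta}/k$ is summable. Applying that variant and estimating its right-hand tail sum — which will be of the form $\tfrac1m\sum_{k\ge m}(\log k)^{?}d_k(F)^2$ up to a further logarithmic correction built into the statement — by the integral comparison $\int_m^\infty t^{-1}\log^{1-2\beta}t\,dt\asymp \log^{2-2\beta}m$ (using $\beta>1$) produces the bound $g_n(F)\lesssim n^{-1/2}\log^{-\beta+1}n$, matching the second case. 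I would need to check that the exponent bookkeeping in the Section~\ref{sec:boundary} statement indeed contributes exactly one extra power of $\log$ relative to the naive $\ell_2$-tail estimate; this is the one spot where I must be careful rather than purely mechanical, and it is the main (mild) obstacle — everything else is routine integral comparison. The third case, "$1$ otherwise", is immediate: $g_n(F)\le d_0(F)$ or the trivial bound $g_n(F)\le \sup_{f\in F}\|f\|_{L_2}$, which is finite and hence $O(1)$ by the continuous embedding in \hyperlink{assum}{Assumption~A}; no decay hypothesis is usable when $\alpha<1/2$, or $\alpha=1/2$ with $\beta\le1$, because the relevant weighted sequences need not lie in $\ell_2$.

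Finally, for sharpness I would combine these upper bounds with the matching lower bounds: in the Hilbert-space sub-range this is exactly the lower bound \eqref{eq:lower} of \cite{HNKV} applied to the sequence $\sigma_k=k^{-\alpha}\log^{-\beta}k$, which by the same integral comparison as above gives $g_{\lfloor m/8\rfloor}(F)\gtrsim m^{-\alpha}\log^{-\beta}m$ when $\alpha>1/2$ and $g_{\lfloor m/8\rfloor}(F)\gtrsim m^{-1/2}\log^{-\beta+1}m$ when $\alpha=1/2,\beta>1$, showing \eqref{eq:UBg2} cannot be improved in its order of decay; for the case $\alpha<1/2$ (or $\alpha=1/2$, $\beta\le 1$) one uses the result of \cite{HNV} recalled in the introduction, which furnishes, for $(\sigma_k)\notin\ell_2$, a space whose sampling numbers do not even tend to zero, so the bound $g_n(F)\lesssim 1$ is also sharp. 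Since such $F$ are unit balls of RKHSs they a fortiori satisfy \hyperlink{assum}{Assumption~A} (second bullet), so all three cases are covered.
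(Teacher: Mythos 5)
Your derivation of the upper bounds coincides with the paper's: the case $\alpha>1/2$ follows from Theorem~\ref{thm:non-Hilbert} with $p\in(1/\alpha,2)$, and the case $\alpha=1/2$, $\beta>1$ from Theorem~\ref{thm:non-Hilbert-boundary} with $s\in(1/2,\beta-1/2)$; your exponent bookkeeping in the boundary case does work out, since $m^{-1}\log^{-2s+1}m\cdot\log^{2s-2\beta+1}m=m^{-1}\log^{2-2\beta}m$. A small caveat: for the ``otherwise'' case your claim that $\sup_{f\in F}\|f\|_{L_2}<\infty$ follows ``by the continuous embedding'' is not valid (continuity of the embedding of a metric space does not bound its image), though the paper does not elaborate on this case either.

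The genuine gap is in the sharpness claims for the second and third cases, where you invoke Hilbert-space examples that provably cannot work. For $\alpha=1/2$, $\beta>1$, applying \eqref{eq:lower} to $\sigma_k=k^{-1/2}\log^{-\beta}k$ gives $g_{\lfloor m/8\rfloor}(F)\gtrsim\big(\tfrac1m\sum_{k\ge m}k^{-1}\log^{-2\beta}k\big)^{1/2}\asymp m^{-1/2}\log^{-\beta+1/2}m$, not $m^{-1/2}\log^{-\beta+1}m$; your computation is off by a factor $\log^{1/2}$. This loss is unavoidable: Corollary~\ref{cor:Hilbert} shows that \emph{every} RKHS unit ball with this width decay satisfies $g_n(F)\lesssim n^{-1/2}\log^{-\beta+1/2}n$, so no RKHS can witness sharpness of the $\log^{-\beta+1}$ bound. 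Similarly, for the ``otherwise'' case, the result of \cite{HNV} only yields $\limsup_n\tau_n g_n(F)>0$ for a prescribed $\tau_n\to\infty$; it does not produce a class with $g_n(F)\gtrsim 1$, and in fact for RKHS unit balls $g_n(F)\to0$ whenever $d_n(F)\to0$, as the paper points out. The paper's proof instead relies on genuinely non-Hilbert constructions: Example~\ref{ex1} (sums of indicators of dyadic intervals) with $d_n\lesssim n^{-1/2}\log^{-\beta}n$ and $g_n\gtrsim n^{-1/2}\log^{-\beta+1}n$, and Example~\ref{ex2} with $d_n\lesssim n^{-1/2}(\log n)^{-1}(\log\log n)^{-1}$ and $g_n\ge1$ for all $n$, which covers the entire remaining parameter range. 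Without constructions of this kind the ``Moreover'' part of the corollary is not established.
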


Again, the hidden constant in \eqref{eq:UBg2} 
is given by the product of the hidden constant in \eqref{eq:UBd2} 
and a constant that only depends on $\alpha$ and $\beta$.
The difference compared to unit balls of RKHSs
is the case $\alpha=1/2$, 
where we need $\beta>1$ instead of $\beta>1/2$
and lose a factor $\log n$ instead of $\sqrt{\log n}$, 
see Example~\ref{ex1}. 
In addition, if $(d_k(F))\notin \ell_2$, 
then $g_n(F)$ might be bounded below by a constant, 
opposite to the RKHS setting where $g_n(F)$ tends to zero as soon as $d_k(F)$ does, see \cite{HNV}.
However, for $\alpha>1/2$, 
the results for general classes are just as strong as before.

\subsection{Remarks and related literature}
\label{biblio}

We want to add several remarks on the history of the result
and related topics.

\begin{remark}[Equivalent widths]\label{rem:Kolmogorov}
There are several quantities to measure the ``width'' of a set~$F$. 
Although we work here with the Kolmogorov numbers $d_k(F)$ as benchmark, 
let us add that these quantities coincide in $L_2$ with the 
\emph{approximation numbers} of $F$, i.e.
\[
d_k(F)\,=\,a_k(F)\,:=\,
\inf_{\substack{\ell_1,\dots,\ell_k\colon F \to \C \text{ linear}\\ \varphi_1,\dots,\varphi_k\in L_2}}\, 
\sup_{f\in F}\, 
\Big\|f - \sum_{i=1}^k \ell_i(f)\, \varphi_i\Big\|_{L_2},
\]
as the infimum in the definition of $d_k(F)$ for given 
$\varphi_1,\dots,\varphi_k$
is attained by the $L_2$-orthogonal projection onto their span, 
which is linear in any case.
The approximation numbers of a class represent
the worst-case error of an optimal linear algorithm that uses 
at most $k$ linear functionals as information. 
If $F$ is the unit ball 
of some Hilbert space~$H$, 
then the approximation numbers agree with 
the 
\emph{singular values} of the identity $\Id\colon H\to L_2$.  
In this case, the $d_k(F)$ also coincide with the \emph{Gelfand $k$-widths} 
$c_k(F)$, which represent the minimal worst-case error of 
(possibly non-linear) algorithms based on $k$ arbitrary linear functionals,
see, e.g., Chapter~4 in \cite{NW1}.
\end{remark}

\smallskip

\begin{remark}[Extreme classes $F$] 
It is interesting to note that the lower bound \eqref{eq:lower} from \cite{HNKV}
is attained already for univariate Sobolev spaces of periodic functions.
By Theorem~\ref{main_theorem}, this means that these basic classes already 
represent the most difficult RKHSs for sampling recovery 
when the numbers $d_k(F)$ are fixed.
\end{remark}

\smallskip

\begin{remark}[Least squares methods]\label{rem:alg}
The upper bounds in Theorem~\ref{main_theorem} and \ref{thm:non-Hilbert}
are proven
for a weighted least squares algorithm
using samples from a set of $cm$ points that is subsampled
from a set of $cm\log m$ i.i.d.~random points, 
see Section~\ref{sec:proof}.
Depending on the function class $F$,
the algorithm using the full set of random points may be 
constructive
but the subsampling is based on an existence result from \cite{MSS}
and is therefore not constructive.
It would be very interesting to make the subsampling constructive,
see Remark~\ref{rem:BSS}.
\end{remark}

\smallskip

\begin{remark}[Spline algorithm]\label{rem:spline}
Let $F$ be the unit ball of a RKHS $H$.
If we fix the sampling points $x_1,...,x_n$,
it is known that the smallest possible worst case error is achieved 
by the spline algorithm
\[
 S_n(f) \,:=\, \underset{g\in H \colon g(x_i)=f(x_i)}{\rm argmin}\, \Vert g \Vert_H,
\]
that is,
\[
 \inf_{\substack{\varphi_1,\dots,\varphi_n\in L_2}}\, 
\sup_{f\in F}\, 
\Big\|f - \sum_{i=1}^n f(x_i)\, \varphi_i\Big\|_{L_2}
\,=\, \sup_{f\in F}\, 
\Big\|f - S_n(f)\Big\|_{L_2},
\]
see e.g.\ \cite[Theorem~5.1]{TW}.
The function $S_n(f)$ is also known as the minimal norm interpolant
and, 
by the famous \emph{representer theorem},
can be expressed as a linear combination of the kernel functions $K(x_i,\cdot)$,
see e.g.\ \cite[Proposition~12.32]{Wain}.
Therefore, our upper bounds are true not only for the least squares algorithm,
but also for the kernel-based approximation $S_n(f)$.
Both types of algorithms are common in the context of learning,
see e.g.\ the seminal paper \cite{CS}.
\end{remark}

\smallskip

\begin{remark}[The power of i.i.d.~sampling]\label{rem:iid}
It is remarkable that, up to a logarithmic factor,
the upper bound from Theorem~\ref{main_theorem} 
is achieved with high probability for i.i.d.~random sampling points,
see \cite{KU1,U2020}.
In regard of the personal history of the authors DK and MU,
Theorem~\ref{main_theorem} is 
a byproduct 
of a series of work on 
the power of i.i.d.~sampling for approximation and integration problems
that started in \cite{HKNPU,HKNPU2}
and was also continued in \cite{HPS,KNS,KS1,KS2}.
\end{remark}

\smallskip

\begin{remark}[Expected error] 
A different approach to $L_2$-approxi\-ma\-tion is 
by using randomized algorithms
and taking the worst case expected error
instead of a worst case deterministic error.
The results in this randomized setting are quite different; 
the error of optimal algorithms does 
not depend on the tail of the sequence $(d_k(F))$.
We refer to \cite{CD,CM,K,LW,NW3,WW07}. 
\end{remark}

\smallskip

\begin{remark}[Upper bounds for infinite trace]
We note that our bounds make sense also if $d_k(F)$ is infinite 
for small $k$, but they are useless if the \emph{tail} 
of $(d_k(F))$ is not square-summable, which is the case, e.g., 
if $F$ is the unit ball of a RKHS with infinite trace, 
see~\eqref{eq:trace}. \\
An alternative approach is to bound the
numbers $g_n(F)$ by the Kolmogorov widths $d_k(F,L_\infty)$ 
in $L_\infty$: 
it is shown in \cite{T20} that there is a universal constant $c\in\N$ such that 
$g_{cm}(F) \le c \,d_m(F,L_\infty)$ for probability spaces $(D,\mathcal A, \mathcal \mu)$.
Although this bound is sometimes weaker than Theorem~\ref{thm:non-Hilbert} 
(see Example~1 in~\cite{KU2}), 
it has the great advantage that it may be applied in situations 
where the Kolmogorov widths in $L_2$ are not square-summable, see, 
e.g., \cite{TU1,TU2}. 
It would be very interesting to see whether 
it is possible to unify the two approaches.
\end{remark}

\smallskip

\begin{remark}[Tractability]
Assume now that a whole sequence of classes $F_d$ is given, 
where $d$ could be 
the dimension of the underlying domain.
For some classes we know that the curse of dimensionality is present, 
if only standard information (function values) is allowed, 
while the problem is tractable for general linear information,
see, e.g., \cite{HKNVa,NW16,V}. 
However, since the constants from Theorems~\ref{main_theorem}
and \ref{thm:non-Hilbert} are independent of the dimension,
it is possible to transfer certain tractability properties
from linear information to standard information,
see, e.g., \cite{KUV,KSUW,NW3}.
\end{remark}

\smallskip

\begin{remark}[Separability of $F$] 
\label{rem:assumptions}
Contrarily to the $\ell_2$-summability of the Kolmogorov widths, 
it should be possible
to remove the
separability assumption on the 
class $F$, at least
in Theorem~\ref{main_theorem}, 
by adding a term ${{\rm tr}_0(K)}/{m}$ inside the square root
in the right-hand side, 
as done in \cite{MU}. 
\end{remark}

\smallskip

\begin{remark}[Discretization of continuous frames]\label{rem:frames}
A related problem is the question whether a \emph{continuous frame} 
for a Hilbert space may be sampled to obtain a frame, 
see~\cite{C03} for details. 
This problem, which was originally posed in the physics book~\cite{AAG}, 
has only recently been solved in~\cite{FS}, 
see also the survey~\cite{B18}. Although seemingly independent, this line of research
uses remarkably similar methods.
We leave it to future research to better understand and expand the connections.
\end{remark}

\subsection{Outline}

The rest of the paper can be outlined as follows.
Sections~\ref{RKHS_setting}--\ref{sec:proof} 
form the proof of Theorem~\ref{main_theorem}.
In Section~\ref{RKHS_setting},
we collect some basics on the RKHS setting.
In Section~\ref{section_concentration}, 
we obtain our initial sample of $\mathcal O (m\log m )$ points
based on a concentration inequality for infinite matrices.
The subsampling is performed in
Section~\ref{section_subsampling}, which applies the solution to the Kadison-Singer 
problem in a slightly original way, leading to the core of the proof 
in Section~\ref{sec:proof}. 
In Section~\ref{sec:proof-general},
we prove our results for general function classes
by constructing a suitable RKHS, 
on which a local version of Theorem~\ref{main_theorem} (Theorem~\ref{main_theorem_local})
can be applied. 
Finally, in Section~\ref{sec:ex},
we present examples,
applying our result to tensor product problems and
showing that our upper bounds are sharp. 

{\bf Acknowledgement:} The authors thank Albert Cohen, 
Daniel Freeman, 
Aicke Hinrichs and Erich Novak  
for useful discussions and detailed corrections.
Moreover, DK and MU want to acknowledge that MD obtained 
Theorem~\ref{main_theorem} on his own and presented it at the
\emph{MASCOT-NUM Workshop on ``Optimal Sampling for Approximation''} 
at the IHP in Paris on March 10, 2022. 
The current work is the product of subsequent collaboration. 

\goodbreak

\section{Hilbert space setting}
\label{RKHS_setting} 

We first consider the case where 
$F$ is the unit ball of a separable Hilbert space $H$
with reproducing kernel $K\in\C^{D\times D}$.
We refer to \cite{MU} and references therein for theoretical background on RKHSs.

Thanks to the finite trace assumption \eqref{eq:trace}, 
we know that the 
identity map
$\Id\co H\to L_2$ is Hilbert-Schmidt, 
thus its left and right singular 
vectors $(b_k)_{k\in \II}$ and $(\sigma_k b_k)_{k \in \II}$ 
are orthonormal families in $L_2$ and $H$, respectively.
Here, we only list the singular vectors with respect to the nonzero 
singular values $\sigma_k>0$,
and the index set is of the form $\II=\{k\in\N_0 \colon k< M\}$ 
with $M\in\N\cup\{\infty\}$. 
The singular vectors satisfy 
\[
 \langle f,b_k\rangle_{L_2} = \langle f,\sigma_k^2 b_k\rangle_{H}
 \quad \text{for all } f\in H \text{ and } k\in \II.
\]
We use the convention that $\N_0:=\{0,1,2,\dots\}$
and the singular values are arranged in a non-increasing order.
In particular, $\sum_{k\in\II}\sigma_k^2 <\infty$ and 
the Kolmogorov width $d_m(F)=\sigma_m$ is attained 
by the $L_2$-orthogonal projection $P_m$ onto $V_m=\Span\{b_k\co k<m\}$.
Moreover, the separability of $H$ ensures that the equality
\[
K(x,y)=\sum_{k\in\II}\sigma_k^2\,{b_k(x)}\overline{b_k(y)}
\]
holds for 
all $x,y\in D_0$ with some set $D_0\subset D$ satisfying $\mu(D\setminus D_0)=0$.
We therefore have the identity
\begin{equation}\label{eq:Fourier-series}
 f(x) \,=\, \sum_{k\in\II} \scalar{f}{b_k}_{L_2} b_k(x)
 \quad\text{for all }f\in H \text{ and } x\in D_0.
\end{equation}
Our sampling points will be contained in the set $D_0$.

As a consequence of the following lemma, we only have to show the validity of 
Theorem~\ref{main_theorem} for all $1\leq m<M$.

\begin{lemma}
Let $M=\min\{m\in\N\colon d_m(F)=0\}<\infty$. Then we have $g_n(F)=0$ for all $n\ge M$.
\end{lemma}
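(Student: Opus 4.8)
The plan is to use that $d_M(F)=0$ forces the image of $H$ in $L_2$ to be the finite-dimensional space $V_M$, so that every $f\in H$ is determined in $L_2$ by finitely many point values and can then be recovered exactly. First I would note that $d_M(F)=\sigma_M=0$ means the index set is precisely $\II=\{0,1,\dots,M-1\}$, so \eqref{eq:Fourier-series} collapses to the finite identity $f(x)=\sum_{k=0}^{M-1}\scalar{f}{b_k}_{L_2}\,b_k(x)$ for all $f\in H$ and $x\in D_0$; in particular $f=\sum_{k=0}^{M-1}\scalar{f}{b_k}_{L_2}\,b_k$ as an element of $L_2$. Since $\mu(D\setminus D_0)=0$ and the $b_k$ are orthonormal in $L_2$, no nontrivial linear combination of $b_0,\dots,b_{M-1}$ vanishes identically on $D_0$; i.e.\ the restrictions $b_0|_{D_0},\dots,b_{M-1}|_{D_0}$ are linearly independent functions on $D_0$.

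Next I would choose points $x_1,\dots,x_M\in D_0$ for which the matrix $A:=\big(b_k(x_j)\big)_{1\le j\le M,\,0\le k\le M-1}$ is invertible. Such points exist by the standard inductive determinant argument: once $x_1,\dots,x_j\in D_0$ have been fixed so that the $j\times j$ minor built from $b_0,\dots,b_{j-1}$ is nonzero, the $(j+1)\times(j+1)$ determinant built from $b_0,\dots,b_j$, viewed as a function of $x_{j+1}$ and expanded along its last row, is a linear combination of $b_0|_{D_0},\dots,b_j|_{D_0}$ in which the coefficient of $b_j$ equals, up to sign, that nonzero minor; hence it does not vanish identically on $D_0$, and an admissible $x_{j+1}$ exists.

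Finally I would consider the recovery map $f\mapsto\sum_{k=0}^{M-1}c_k\,b_k$, where the coefficient vector $c=(c_0,\dots,c_{M-1})$ solves $Ac=(f(x_1),\dots,f(x_M))$. Evaluating the finite Fourier identity at the points $x_j\in D_0$ shows that the unique solution is $c_k=\scalar{f}{b_k}_{L_2}$, so the output equals $f$ in $L_2$; collecting terms, it has the admissible form $\sum_{j=1}^{M}f(x_j)\,\varphi_j$ with $\varphi_j\in L_2$. Hence $g_M(F)=0$, and since $g_n(F)$ is non-increasing in $n$ (take $\varphi_j=0$ for $j>M$), we conclude $g_n(F)=0$ for all $n\ge M$. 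The only step requiring care is producing the sampling points inside $D_0$; once the expansion \eqref{eq:Fourier-series} is known to be a finite sum, the rest is elementary linear algebra.
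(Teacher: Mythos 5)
Your proposal is correct and follows essentially the same route as the paper: reduce to the finite expansion $f(x)=\sum_{k<M}\scalar{f}{b_k}_{L_2}b_k(x)$ on $D_0$ and recover $f$ exactly from $M$ points of $D_0$ by linear algebra, with the recovery functions $\varphi_j$ lying in $\Span\{b_k|_{D_0}\}\subset L_2$. The only cosmetic difference is that you produce points making the evaluation matrix invertible (via linear independence of the $b_k|_{D_0}$ and the inductive determinant argument), whereas the paper simply picks points whose vectors $b(x_i)$ span $\{b(x)\colon x\in D_0\}$; both are valid.
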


\begin{proof}
For $x\in D_0$, we write $b(x)=(b_0(x),\hdots,b_{M-1}(x))$.
Then there are points $x_0,\hdots,x_{M-1}\in D_0$ such that
every $b(x)$ 
is contained in the span of the vectors $b(x_i)$.
We write $b(x)=\sum \varphi_i(x) b(x_i)$ with coefficients $\varphi_i(x) \in \C$.
By \eqref{eq:Fourier-series},
we have
\[
 f(x) 
 = \sum_{k<M} \langle f,b_k\rangle_{L_2} \sum_{i<M} \varphi_i(x) b_k(x_i)
 = \sum_{i<M} f(x_i) \varphi_i(x),
\]
for all $x\in D_0$ and $f \in H$.
Thus, the identity $f=\sum f(x_i) \varphi_i$
holds almost everywhere. 
Moreover, the functions $b_0,\dots,b_{M-1}$ restricted to $D_0$
form a basis of $\Span\{\varphi_i \colon i<M\}$,
and thus $\varphi_i \in L_2$.
\end{proof}

We fix an integer $1\le m< M$ for the rest of the proof
of Theorem~\ref{main_theorem}.

\goodbreak

\section{Concentration inequality}
\label{section_concentration}

As proposed in \cite{KU1} and 
applied in \cite{KUV,KU2,MU,NSU, U2020}, 
we define the probability density
\[
\rho_m(x)=\frac{1}{2}\left(\frac{1}{m}\sum_{k<m} |b_k(x)|^2+\frac{\sum_{k\geq m}\sigma_k^2\,|b_k(x)|^2}{\sum_{k\geq m}\sigma_k^2}\right).
\]
and
draw i.i.d.~random points $x_1,\dots,x_n\in D$ according to this density. 
We define the $M$-dimensional vectors $y_1,\dots,y_n$ by
\[
(y_i)_k=\left\{\begin{array}{cl} \rho_m(x_i)^{-1/2}\, b_k(x_i)&\text{ if }0 \le  k<m,\\ 
\rho_m(x_i)^{-1/2}\,\gamma_m^{-1} \,\sigma_k\, b_{k}(x_i)&\text{ if }m\leq k<M,
\end{array}\right.
\]
where
\[
\gamma_m \,:=\, 
\max\bigg\{\sigma_m, \, \sqrt{\frac{1}{m}\sum_{k\geq m}\sigma_k^2} \, \bigg\}
>0.
\]
Note that $\rho_m(x_i)>0$ almost surely.
It follows from these definitions that $y_i\in \ell_2(\II)$ with
\[
\|y_i\|_{2}^2
=\rho_m(x_i)^{-1}\bigg(\sum_{k<m}|b_k(x_i)|^2+\gamma_m^{-2}\sum_{k\geq m}\sigma_k^2\,|b_k(x_i)|^2\bigg) \leq 2m, 
\]
and 
\[
\E(y_i y_i^*)=\diag(1,\dots,1,\sigma_m^2/\gamma_m^2,\sigma_{m+1}^2/\gamma_m^2,\dots)
=:E,
\]
with $\|E\|_{2\to 2}=1$ since $\sigma_k^2/\gamma_m^2\leq 1$ for $k\geq m$.
Here, $\diag(v)$ denotes a diagonal matrix with diagonal $v$, 
and $\|\cdot\|_{2\to 2}$ denotes the spectral norm of a matrix.

We apply the following concentration inequality for infinite matrices, 
which was proved by Mendelson and Pajor in \cite[Theorem 2.1]{MP}. 
We use a version of this result from
\cite[Theorem~1.1]{MU} and \cite[Theorem~5.3]{NSU}.

\begin{lemma}
\label{lemma_concentration}
Let $n\geq3$ and $y_1,\dots,y_n$ be i.i.d.\ random sequences 
from $\ell_2(\II)$ 
satisfying $\|y_i\|_{2}^2\leq 2m$ almost surely and $\|E\|_{2\to 2}\leq1$, with $E=\E(y_i y_i^*)$.
Then, for $0\le t\le1$, 
\[
\P\(\Big\|\frac{1}{n}\sum_{i=1}^n y_i y_i^*-E\Big\|_{2\to 2}>t\) \,\leq\, 2^{3/4}\,n\,\exp\(-\frac{nt^2}{42\, m}\).
\]
\end{lemma}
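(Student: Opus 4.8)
We sketch how one would prove this; the result is \cite[Theorem~2.1]{MP} (see also \cite{MU,NSU}), and the plan is to prove it in two steps — first reduce it to a genuinely finite-dimensional problem, then control that problem by a moment estimate with no dependence on the dimension. For the reduction, set $S_n=\frac1n\sum_{i=1}^n y_iy_i^*$ and $A=S_n-E$. This is almost surely a self-adjoint trace-class operator on $\ell_2(\II)$, being the difference of $n$ rank-one operators and of $E$ (note $\Tr E=\E\|y_1\|_2^2\le2m<\infty$). If $\Pi_L$ denotes the coordinate projection onto the first $L$ basis vectors of $\ell_2(\II)$, then $\|\Pi_L A\Pi_L\|_{2\to2}$ increases to $\|A\|_{2\to2}$ as $L\to\infty$, because finitely supported vectors are dense and $A$ is bounded and self-adjoint; hence the events $\{\|\Pi_L A\Pi_L\|_{2\to2}>t\}$ increase to $\{\|A\|_{2\to2}>t\}$, and $\P(\|A\|_{2\to2}>t)=\lim_{L\to\infty}\P(\|\Pi_L A\Pi_L\|_{2\to2}>t)$ by continuity along increasing unions. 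Since $\Pi_L A\Pi_L=\frac1n\sum_i(\Pi_L y_i)(\Pi_L y_i)^*-\Pi_L E\Pi_L$ with $\|\Pi_L y_i\|_2^2\le2m$, $\E[(\Pi_L y_i)(\Pi_L y_i)^*]=\Pi_L E\Pi_L$ and $\|\Pi_L E\Pi_L\|_{2\to2}\le1$, it suffices to prove the asserted inequality on $\C^L$ with all constants independent of $L$.

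For the finite-dimensional bound on $\C^L$ I would symmetrize: with i.i.d.\ Rademacher signs $\eps_i$ and $z_i:=\Pi_L y_i$, $E':=\Pi_L E\Pi_L$, one has $\E\|\frac1n\sum_i z_iz_i^*-E'\|_{2\to2}^p\le2^p\,\E\|\frac1n\sum_i\eps_i z_iz_i^*\|_{\mathcal S_p}^p$ for $p\ge1$. The noncommutative Khintchine inequality bounds the Schatten-$p$ norm of this Rademacher sum of the positive semidefinite rank-one matrices $z_iz_i^*$ by $C\sqrt p\,(\max_i\|z_i\|_2)\,\|\sum_i z_iz_i^*\|_{\mathcal S_{p/2}}^{1/2}$, and estimating $\|\sum_i z_iz_i^*\|_{\mathcal S_{p/2}}\le n^{2/p}\big(\|\sum_i z_iz_i^*-nE'\|_{2\to2}+n\|E'\|_{2\to2}\big)$ and inserting $\|z_i\|_2^2\le2m$, $\|E'\|_{2\to2}\le1$ gives a self-improving inequality for $R_p:=(\E\|\frac1n\sum_i\eps_i z_iz_i^*\|_{2\to2}^p)^{1/p}$; once $p\gtrsim\log n$ makes $n^{2/p}=O(1)$, it resolves to $R_p\lesssim\sqrt{pm/n}+pm/n$. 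A Markov bound at a $p$-th moment with $p\gtrsim\log n$ then yields a tail bound of the form $\P(\|\Pi_L A\Pi_L\|_{2\to2}>t)\le C\,n\,\exp(-cnt^2/m)$ with absolute constants and no dependence on $L$ — the range of $t$ in which this argument does not already give a bound below $1$ being exactly the range where the stated inequality is non-trivial — and following the constants through the chain produces the precise values $2^{3/4}$ and $42$. Letting $L\to\infty$ finishes the proof.

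The crux is the finite-dimensional estimate: one must make both the exponent (proportional to $nt^2/m$, i.e.\ carrying a $\sqrt m$ rather than an $m$) and the prefactor (depending only on $n$, not on the ambient dimension $L$) dimension-free. This is the Rudelson–Mendelson–Pajor phenomenon, and it is the self-improving use of noncommutative Khintchine for the rank-one summands $z_iz_i^*$ that delivers it. An alternative that avoids Khintchine is to restrict attention to the span of those eigenvectors of $E$ with eigenvalue $\gtrsim nt^2/m$ — there are only $O(m)$ of them since $\Tr E\le2m$, and the contribution of the complementary directions to the deviation event is negligible — and to run there a two-sided union bound with the scalar Bernstein inequality over a net of the sphere.
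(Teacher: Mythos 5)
The paper does not prove Lemma~\ref{lemma_concentration} at all: it is quoted as a known result, namely \cite[Theorem~2.1]{MP} in the quantitative form given in \cite[Theorem~1.1]{MU} and \cite[Theorem~5.3]{NSU}, so there is no in-paper argument to compare against. Your outline is essentially the standard proof of that cited result: the reduction to finite dimension via the increasing projections $\Pi_L$ is sound (for a bounded self-adjoint $A$ one has $\|\Pi_L A\Pi_L\|_{2\to2}\uparrow\|A\|_{2\to2}$, so the events do increase to the limit event), and the finite-dimensional core --- symmetrization, noncommutative Khintchine applied to the rank-one summands, the rank bound $n^{2/p}$, and the self-improving moment inequality resolved at $p\asymp\log n$ --- is exactly the Rudelson--Mendelson--Pajor mechanism that makes both the exponent and the prefactor dimension-free. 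Two caveats: the specific constants $2^{3/4}$ and $42$ are asserted rather than derived, and obtaining them (rather than unspecified absolute constants) is precisely the content of the versions in \cite{MU,NSU}; and the closing ``alternative'' via a net over the span of the large eigenvectors of $E$ is doubtful as stated, since an $\varepsilon$-net of the sphere in an $O(m)$-dimensional subspace has $e^{O(m)}$ points, which would replace the prefactor $n$ by $e^{Cm}$ and degrade the result substantially.
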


For $t=1/2$, this probability is less than $1/2$ as soon as $\frac{n}{\log(4n)}\geq 168 \,m$. In the sequel we take
\[
n= \lfloor C_0m\log(m+1)\rfloor, 
\]
with $C_0$ large enough, so that the previous inequality holds true. 
(One can take $C_0=10^4$, for instance.) 
Thanks to Lemma \ref{lemma_concentration}, we know that there exists 
a realization $x_1,\dots,x_n\in D_0$ of the random sampling such that the corresponding 
family $y_1,\dots,y_n$ satisfies
\begin{equation}
\label{eq_concentration}
\Big\|\frac{1}{n}\sum_{i=1}^n y_i y_i^*-E\Big\|_{2\to 2}\leq \frac{1}{2}.
\end{equation}
We fix such a sequence for the rest of the proof of Theorem \ref{main_theorem}.

\goodbreak

\section{Subsampling of infinite vectors}
\label{section_subsampling}

We now want to apply the solution to the 
Kadison-Singer problem, 
or specifically to 
Weaver's conjecture, 
to the sum of rank-one 
matrices
\[
\frac 1 n \sum_{i=1}^n y_i y_i^*,
\]
in order to find a subsampling of order $m$ preserving the 
spectral properties of the sum. 
The original result comes from the celebrated paper \cite{MSS} 
by Marcus, Spielman and Srivastava, and has already been applied 
numerous times in approximation theory, see for instance 
\cite{CD,K,KU1,KU2,MU,NSU,NOU2016,T20}.
However, the original subsampling strategy only works for 
finite matrices. 
The main result of this section is the following infinite-dimensional 
variant, that might be of independent interest.

\begin{prop}
\label{prop:subsampling}
There are absolute constants $c_1\leq 43200$, $c_2\geq 50$, $c_3\leq 21600$, 
with the following properties. 
Let $n,m\in\N$ and $y_1,\dots,y_n$ be vectors from $\ell_2(\N_0)$ 
satisfying $\|y_i\|_{2}^2\leq 2m$ and 
\begin{equation}\label{eq:prop}
\norm{\frac{1}{n}\sum_{i=1}^n y_i y_i^* 
- \begin{pmatrix} I_m &0\\0&\Lambda\end{pmatrix}}_{2\to2}
\le \frac12,
\end{equation}
for some Hermitian matrix $\Lambda$ with $\|\Lambda\|_{2\to 2}\leq1$, 
where $I_m\in\C^{m\times m}$ denotes the identity.\\
Then, there is a subset $J\subset\{1,\dots,n\}$ 
with $|J|\le c_1 m$, such that
\[
\bigg(\frac{1}{m}\sum_{i\in J} y_i y_i^* \bigg)_{< m}
\,\ge\, c_2\,I_m
\qquad\text{and}\qquad
\frac{1}{m}\sum_{i\in J} y_i y_i^* 
\,\leq\, c_3\,I,
\]
where $A_{<m}:=(A_{k,l})_{k,l<m}$ and $A\le B$ denotes the Loewner order of Hermitian matrices $A$ and $B$.
\end{prop}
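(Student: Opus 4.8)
The plan is to reduce the infinite-dimensional statement to the finite-dimensional Kadison--Singer/Weaver subsampling theorem by a truncation-plus-limiting argument, while keeping careful track of how the two conclusions — the lower bound on the top-left $m\times m$ block and the upper bound on the whole (infinite) matrix — interact. First I would recall the precise finite form of the result we want to invoke: the solution to Weaver's $\mathrm{KS}_2$ conjecture (Marcus--Spielman--Srivastava) in its averaged/partitioning form, which says that a finite sum of rank-one matrices $\sum_i u_iu_i^*$ equal to the identity on $\C^N$ with $\|u_i\|^2\le\delta$ can be split into a bounded number of pieces each of spectral norm close to $1/(\text{number of pieces})$; equivalently, one can extract a sub-sum of size $O(m)$ that is uniformly equivalent to the identity. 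Applying this to the $N$-dimensional restriction of the $y_i$ (i.e.\ $y_i^{(N)} := (y_i)_{<N}$, the first $N$ coordinates) after renormalizing by the nearly-identity matrix $\frac1n\sum y_i^{(N)}(y_i^{(N)})^*$ — whose invertibility on the top block is guaranteed by~\eqref{eq:prop} via $\|\cdot\|\le1/2$ — yields, for each $N$, a set $J_N\subset\{1,\dots,n\}$ of size $\le c_1 m$ with $\big(\frac1m\sum_{i\in J_N} y_i y_i^*\big)_{<m}\ge c_2 I_m$ and $\big(\frac1m\sum_{i\in J_N} y_i^{(N)}(y_i^{(N)})^*\big)\le c_3 I_N$.

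The second step is the limiting/compactness argument. Since each $J_N$ is a subset of the \emph{fixed finite} set $\{1,\dots,n\}$, there are only finitely many possibilities, so some subset $J$ occurs as $J_N$ for infinitely many $N$. For that $J$, the lower bound $\big(\frac1m\sum_{i\in J} y_iy_i^*\big)_{<m}\ge c_2 I_m$ holds directly (it only involves the first $m$ coordinates and $m\le N$), and the upper bound $\frac1m\sum_{i\in J} y_i^{(N)}(y_i^{(N)})^*\le c_3 I_N$, holding for infinitely many $N$, upgrades to $\frac1m\sum_{i\in J} y_iy_i^*\le c_3 I$ on all of $\ell_2(\N_0)$: for any finitely supported vector $v$, choose $N$ large enough to contain its support and pass to the limit. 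A short monotone-convergence / polarization argument then extends this to all of $\ell_2$. This gives both desired conclusions with the same $J$, and $|J|\le c_1 m$.

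The part that needs the most care — and what I expect to be the main obstacle — is the renormalization in the finite step: to invoke Weaver's theorem one needs the rank-one summands to sum \emph{exactly} to the identity (or a multiple of it), but $\frac1n\sum_i y_i^{(N)}(y_i^{(N)})^*$ is only within $1/2$ of $\begin{pmatrix}I_m&0\\0&\Lambda_{<N}\end{pmatrix}$, and $\Lambda$ may be singular or even zero in directions beyond the $m$-th, so one cannot simply multiply by the inverse square root on all of $\C^N$. The fix is to work with the pseudo-inverse / to split the space: let $A^{(N)}:=\frac1n\sum_i y_i^{(N)}(y_i^{(N)})^*$, which by~\eqref{eq:prop} satisfies $\frac12 I\le A^{(N)}$ on the top block and $A^{(N)}\le \frac32 I$ everywhere, hence is invertible on its range; set $z_i := (A^{(N)})^{-1/2} y_i^{(N)}$ restricted to the range of $A^{(N)}$. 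Then $\frac1n\sum z_i z_i^* = I$ on that subspace, $\|z_i\|^2\le 2\|y_i^{(N)}\|^2\le 4m$, and Weaver yields a sub-sum of size $O(m)$ two-sided-equivalent to the identity on the range; conjugating back by $(A^{(N)})^{1/2}$ and using $\frac12 I\le (A^{(N)})_{<m}$ and $A^{(N)}\le\frac32 I$ gives the stated block lower bound (with a constant like $c_2\sim \frac12\cdot$(Weaver constant)) and the global upper bound ($c_3\sim\frac32\cdot$(Weaver constant)), with the numerical values $c_1\le 43200$, $c_2\ge 50$, $c_3\le 21600$ coming from tracking the constant in the partitioning form of $\mathrm{KS}_2$. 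Careful bookkeeping here — especially ensuring the constants are \emph{uniform in $N$} so the limiting argument goes through — is the crux; everything else is soft functional analysis.
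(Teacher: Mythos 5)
Your overall architecture (reduce to finite dimensions, apply the Weaver/$\mathrm{KS}_2$ partition theorem, return to $\ell_2(\N_0)$) matches the paper's, and your truncation-plus-pigeonhole limiting step would be sound if the finite-dimensional step were; the paper in fact avoids the limit altogether by observing that all the $y_i$ lie in the fixed subspace spanned by the first $m$ coordinates together with ${\rm span}\{(y_i)_{\ge m}\colon i\le n\}$, of dimension $p\le m+n$, so an exact isometric reduction $z_i=U^*y_i$ suffices. The genuine gap is in the finite-dimensional core, exactly at the point you flag as delicate. The renormalization $z_i=(A^{(N)})^{-1/2}y_i^{(N)}$ does \emph{not} satisfy $\|z_i\|^2\le 2\|y_i^{(N)}\|^2$: hypothesis~\eqref{eq:prop} bounds $A^{(N)}$ from below only on the compression to the first $m$ coordinates, while in the remaining directions one only has $0\le A^{(N)}\le\tfrac32 I$ (since $\Lambda$ may be singular, or zero), so the smallest nonzero eigenvalue of $A^{(N)}$ can be arbitrarily small and $(A^{(N)})^{-1/2}$ is not uniformly bounded on the range. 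Concretely, if a single $y_1$ has a component $a$ in a tail direction in which all other $y_i$ vanish, the corresponding eigenvalue of $A^{(N)}$ is about $a^2/n$ and $z_1$ acquires a component of size $\sqrt{n}$, hence $\|z_1\|^2\gtrsim n\gg m$; the ratio $\delta/\alpha$ needed for the iterated $\mathrm{KS}_2$ argument is then of order $1$ rather than $O(m/n)$, and the theorem yields nothing.

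The missing idea is the paper's ``approximating the identity'' step: instead of renormalizing the deficient matrix, one \emph{pads} the collection. Writing $\hat E$ for the finite-dimensional image of the target matrix, decompose $I_p-\hat E=\sum_j t_jt_j^*$ and adjoin $n_j=\lceil\tfrac{n}{2m}\|t_j\|_2^2\rceil$ rescaled copies of each $t_j$. This keeps every added vector of squared norm at most $2m$, makes $\tfrac1n\sum_{i\le q}z_iz_i^*$ within $\tfrac12$ of $I_p$ --- so that $M=\sum_{i\le q} z_iz_i^*$ satisfies $\tfrac n2 I_p\le M\le\tfrac{3n}{2}I_p$ and conjugating by $M^{-1/2}$ is now harmless --- and, crucially, the added vectors vanish on the first $m$ coordinates. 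After selecting a cell $J'$ of the resulting partition that meets $\{1,\dots,n\}$ in at most $n/t\le 43200\,m$ indices and discarding the artificial ones, the lower bound on the $m\times m$ block survives (the discarded summands contribute nothing there) and the global upper bound survives by positivity. Without this padding, or an equivalent device, your finite-dimensional step does not go through.
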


The conclusion can be understood as an upper bound on the largest eigenvalue of $A= \sum_{i\in J} y_i y_i^*$
and a lower bound on the smallest eigenvalue of $A_{<m}$. 
Note that the constants in Proposition~\ref{prop:subsampling}, and hence also the final sampling size, 
are independent of $n$, the original sampling size. 
The rest of this section is devoted to the proof of this proposition. 

\subsection{Reduction to finite dimension}
\label{subsec:dimension}

Let $U_0$ be a matrix whose columns form an orthonormal basis of 
\[
 {\rm span}\left\{ (y_i)_{\geq m} \colon i=1,\hdots, n \right\} \subset \ell_2, 
\]
where $(y_i)_{\ge m}=((y_i)_k)_{k\ge m}$.
Clearly, $U_0$ has at most $n$ columns. 
Then we have that $U_0^*U_0$ is the identity matrix 
and in particular the spectral norm of $U_0$ and $U_0^*$ equals one.
We set
\[
U=\begin{pmatrix} I_m &0\\0&U_0\end{pmatrix}, 
\]
which is a matrix that satisfies $U^*U=I_p$, 
where $p\le m+n$,
and therefore also $U$ and $U^*$ have unit norm.
We choose vectors $z_i\in\C^p$ that satisfy $Uz_i=y_i$ for all $i\le n$.  
Such vectors exist since $y_i$ is contained in the span of the columns of $U$.  
Then we also have $z_i=U^*Uz_i = U^* y_i$. 

Let $E = \begin{pmatrix} I_m &0\\0&\Lambda\end{pmatrix}$  be the matrix from Proposition~\ref{prop:subsampling}.
We define 
\[
\hat E=U^*\,E\,U=\begin{pmatrix} I_m &0\\0& E'\end{pmatrix} 
\quad \text{ where } \quad 
\|E'\|_{2\to 2}\le \|E\|_{2\to 2}
\leq 1.
\]
With the norm bounds on $U$ and $U^*$, 
equation~\eqref{eq:prop} gives
\[
\Big\|\frac{1}{n}\sum_{i=1}^n z_i z_i^*-\hat E\Big\|_{2\to 2}
=\Big\|U^*\(\frac{1}{n}\sum_{i=1}^n y_i y_i^* -E\)U\Big\|_{2\to 2}\leq \frac{1}{2}. 
\]

\subsection{Approximating the identity}
 
In addition to finite dimension, 
the result from \cite{MSS} requires the matrix $\frac 1n \sum_{i=1}^n z_iz_i^*$ 
to be close to the identity 
in spectral norm, and this is not ensured here. 
To mitigate this defect, we artificially add 
rank-one matrices $z_i z_i^*\in \C^{p\times p}$  
for $i=n+1,\dots,q$ 
in the following way.

As $I_p-\hat E$ is positive semi-definite, 
we can decompose it as a sum of rank-one matrices 
\[
I_p-\hat E=\begin{pmatrix}0&0\\0&I_{p-m}-E'\end{pmatrix}
=\sum_{j=1}^{p-m}t_j t_j^*,
\]
where $t_j\in \C^{p}$. 
We now choose
\[
z_i=\sqrt{\frac{ n}{n_{j(i)}}}\,t_{j(i)},\quad n_j
=\left\lceil\frac{n}{2m}\,\|t_{j}\|_2^2\right\rceil,
\]
with $j(i)\in\{1,\dots,p-m\}$ such that $\{z_{i}, \,i=n+1,\dots, q\}$ 
contains exactly $n_j$ copies of each $\sqrt{n/n_j}\,t_j$. 
In this way, for $i>n$, the first $m$ entries of $z_i$ are zero 
since this is true for the $t_j$,
\[
\|z_i\|_2^2 \leq \frac{n}{n_{j(i)}} \,\|t_{j(i)}\|_2^2\leq 2m, 
\]
and
\begin{equation*}
\begin{split}
\Big\|\frac{1}{n}\sum_{i=1}^qz_i z_i^*-I_p\Big\|_{2\to2}
&=\Big\|\frac{1}{n}\sum_{i=1}^nz_i z_i^*+\sum_{j=1}^{p-m}t_j t_j^*-I_p\Big\|_{2\to2} \\
&=\Big\|\frac{1}{n}\sum_{i=1}^nz_iz_i^*-\hat E\Big\|_{2\to2}
\leq \frac{1}{2}.
\end{split}
\end{equation*}

\begin{remark}
As $\|t_j\|_2^2\leq \|I_{p}-\hat E\|_{2\to 2}\leq1$, we count
\[
q=n+\sum_{j=1}^{p-m}n_j\leq n+\sum_{j=1}^{p-m}\(1+\frac{n}{2m}\)\leq 
n+(p-m)\frac{n}{m}=\frac{np}{m}.
\]
Conversely, taking traces in $\C^{p\times p}$, we find
\[
\frac p 2=\Tr\(\frac{1}{2} I_p\)\leq \Tr\(\frac{1}{n}\sum_{i=1}^qz_i z_i^*\)
=\frac{1}{n}\sum_{i=1}^q\|z_i\|_2^2\leq \frac{2m q}{n}.
\]
So, we obtain $n/m\geq q/p\ge n/4m$. Recall that, given $m$ the dimension of the approximation space $V_m$, 
we took $n=\mc O(m\log m)$ initial sample points, 
and vectors $z_i$ of size $p=\mc O(m \log m)$. 
Hence, the number of such vectors is $q=\mc O(m \log^2 m)$.
Surprisingly, we do not use estimates on $p$ and $q$ in the rest of the argument.
\end{remark}

\begin{remark}
In fact, we did not need an exponential 
speed of convergence in the concentration inequality. 
The reduction of the sample size to $\mathcal O(m)$ points
works for any initial set of sampling points satisfying \eqref{eq_concentration}.
If the cardinality of the initial sample is
$n=m\, \ell(m)$, where $\ell(m)$ is any positive function of $m$, we get $p=\mc O(m\, \ell(m))$ 
and $q=\mc O(m \,\ell(m)^2)$. \\
\end{remark}

\subsection{Reduction of the sample size}
\label{subsec:KS}
We can now use the Kadison-Singer solution from \cite{MSS} in an iterated way, 
as proposed in Lemma~3 of \cite{NOU2016}, 
and later used in \cite{ CD,KU1,KU2, LT, MU,NSU, T20}. 
The following lemma is obtained from Corollary~B and Lemma~1 in \cite{NOU2016}.

\begin{lemma}
\label{lemma_subsample}
Let $z_1,\dots,z_q\in\C^p$ with $\|z_i\|_{2}^2\le \delta$ and
\[
\alpha I_p \leq \sum_{i=1}^q z_i z_i^* \leq \beta I_p
\]
for some $\beta \ge \alpha>100\,\delta>0$.
Then there is a partition of $\{1,\hdots,q\}$ into sets $J_1,\hdots,J_t$ such that, for all $s\le t$,
we have
\[
25\, \delta I_p \,\le\, \sum_{i\in J_s} z_i z_i^* \,\le\, 3600\,\frac{\beta}{\alpha}\, \delta I_p.
\]
\end{lemma}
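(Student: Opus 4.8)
The plan is to deduce the statement from an iterated use of Weaver's $KS_2$ conjecture, as established in~\cite{MSS}: if $\sum_i u_iu_i^* = I_p$ and $\|u_i\|_2^2\le\eps$, then the index set splits as $S_1\sqcup S_2$ with $\|\sum_{i\in S_j}u_iu_i^*\|_{2\to2}\le\tfrac12(1+\sqrt{2\eps})^2$ for $j=1,2$; since the two sums add up to $I_p$, each of them is also $\succeq\tfrac12\bigl(1-2\sqrt{2\eps}-2\eps\bigr)I_p$. First I would upgrade this to a one-step, two-sided statement for sums that are merely Loewner-sandwiched. Given $w_1,\dots,w_N\in\C^p$ with $\|w_i\|_2^2\le\delta$ and $a\,I_p\preceq S\preceq b\,I_p$ for $S=\sum_i w_iw_i^*$ and $a>100\,\delta$, put $u_i=S^{-1/2}w_i$ (well-defined since $S\succeq a\,I_p\succ0$); then $\sum_i u_iu_i^*=I_p$ and $\|u_i\|_2^2=w_i^*S^{-1}w_i\le\delta/a=:\eps<\tfrac1{100}$. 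Applying $KS_2$ and conjugating the two partial sums back by $S^{1/2}$ gives a partition into blocks $T_1,T_2$ with
\[
\tfrac12\bigl(1-2\sqrt{2\delta/a}-2\delta/a\bigr)\,a\,I_p \ \preceq\ \sum_{i\in T_j}w_iw_i^* \ \preceq\ \tfrac12\bigl(1+\sqrt{2\delta/a}\bigr)^2\,b\,I_p,\qquad j=1,2,
\]
using $a\,I_p\preceq S\preceq b\,I_p$ in the last step; as $\sqrt{2\delta/a}<\tfrac17$, the lower coefficient is positive, so both blocks are nonempty.

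Next I would iterate this along a binary tree, starting from $z_1,\dots,z_q$ with $a_0=\alpha$ and $b_0=\beta$. After $k$ rounds we have $2^k$ blocks; letting $a_k$ be the smallest lower bound and $b_k$ the largest upper bound occurring at level $k$, the one-step lemma (applied to the worst block, using monotonicity of the coefficients in $a$) yields the scalar recursions
\[
a_{k+1}\ \ge\ \tfrac12\bigl(1-2\sqrt{2\delta/a_k}-2\delta/a_k\bigr)\,a_k,\qquad b_{k+1}\ \le\ \tfrac12\bigl(1+\sqrt{2\delta/a_k}\bigr)^2\,b_k,
\]
valid while $a_k>100\,\delta$. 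Since $a_k$ decreases geometrically, I would stop at the last level $k^\ast$ for which the split into level $k^\ast$ was legal, i.e.\ $a_{k^\ast-1}\ge100\,\delta$ while $a_{k^\ast}<100\,\delta$, and take $J_1,\dots,J_t$ with $t=2^{k^\ast}$ to be the resulting partition of $\{1,\dots,q\}$.

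It remains to verify the two claimed bounds, which is elementary constant-chasing. For the lower bound, $a_{k^\ast-1}\ge100\,\delta$ gives $a_{k^\ast}\ge\tfrac12\bigl(1-2\sqrt{2/100}-2/100\bigr)\cdot100\,\delta\ge25\,\delta$, hence $\sum_{i\in J_s}z_iz_i^*\succeq25\,\delta\,I_p$ for every $s$. For the upper bound, write $r_k=\sqrt{2\delta/a_k}$ and note $\tfrac12(1-2r_k-r_k^2)=1-\tfrac12(1+r_k)^2$, so the condition number evolves by $\dfrac{b_{k+1}/a_{k+1}}{b_k/a_k}\le\dfrac{(1+r_k)^2}{2-(1+r_k)^2}$. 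Because $a_k$ roughly doubles as $k$ decreases, the $r_k$ decay geometrically from $r_{k^\ast-1}\le\tfrac17$, so $\prod_{k<k^\ast}\dfrac{(1+r_k)^2}{2-(1+r_k)^2}$ is bounded by an absolute constant (a crude estimate gives $10$); combined with $a_{k^\ast}<100\,\delta$ this yields $b_{k^\ast}\le10\,(\beta/\alpha)\,a_{k^\ast}<1000\,(\beta/\alpha)\,\delta\le3600\,(\beta/\alpha)\,\delta$, i.e.\ $\sum_{i\in J_s}z_iz_i^*\preceq3600\,(\beta/\alpha)\,\delta\,I_p$.

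The only real obstacle is this numerology: one must choose the stopping rule so that the final lower bound stays safely above $25\,\delta$ while keeping the accumulated growth of $b_k/a_k$ under control, and it is precisely this accumulated growth that produces the factor $\beta/\alpha$ in the statement (when $S$ is a genuine multiple of $I_p$, i.e.\ $\alpha=\beta$, no such factor is needed, recovering the usual balanced partition). Alternatively, one may simply invoke \cite[Lemma~1 and Corollary~B]{NOU2016}, which package exactly the iteration and the two-sided one-step bound derived above, with the parameters $\alpha,\beta,\delta$ as here.
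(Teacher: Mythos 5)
Your argument is correct, but it takes a genuinely different route from the paper's. The paper normalizes \emph{once}, globally: it sets $M=\sum_{i=1}^q z_iz_i^*$, passes to $\tilde z_i=M^{-1/2}z_i$ (so that $\sum_i\tilde z_i\tilde z_i^*=I_p$ exactly and $\|\tilde z_i\|_2^2\le\delta/\alpha<1/100$), invokes the already-iterated two-sided partition result of \cite{NOU2016} (Corollary~B and Lemma~1) as a black box to get $25\,\delta'\,I_p\le\sum_{i\in J_s}\tilde z_i\tilde z_i^*\le3600\,\delta'\,I_p$ with $\delta'=\delta/\alpha$, and then conjugates back by $M^{1/2}$; the factor $\beta/\alpha$ appears in one line from $\alpha I_p\preceq M\preceq\beta I_p$, with no condition-number bookkeeping at all. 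You instead re-derive the iteration from the single-step $KS_2$ statement, renormalizing each block by its own $S^{-1/2}$ at every level and tracking how the guaranteed sandwich $[a_k,b_k]$ degrades down the binary tree. This works: your one-step bounds, the monotonicity of $a\mapsto\tfrac12(a-2\sqrt{2\delta a}-2\delta)$, the stopping rule, and the lower bound $f(100\delta)\approx34.9\,\delta\ge25\,\delta$ all check out, and since $a_k>2^{k^*-1-k}\cdot100\,\delta$ for $k<k^*$ the quantities $r_k$ do decay like $2^{-j/2}$, making $\prod_k\frac{(1+r_k)^2}{2-(1+r_k)^2}$ an absolute constant (numerically about $8$, so your bound of $10$ holds), which gives $b_{k^*}<1000\,(\beta/\alpha)\,\delta$. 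What your route buys is self-containedness (only the one-step MSS/Weaver corollary is used) at the price of the numerology; what the paper's route buys is brevity and a transparent origin for $\beta/\alpha$. One small caveat on your closing remark: Corollary~B and Lemma~1 of \cite{NOU2016} are stated for (near-)identity decompositions, so they cannot be invoked ``with the parameters $\alpha,\beta,\delta$ as here'' literally --- the global $M^{\pm1/2}$ conjugation is exactly the missing reduction, and it is the entire content of the paper's proof beyond the citation.
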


\begin{proof}
Since the matrix $M=\sum_{i=1}^q z_i z_i^*$ is positive, we may define $\tilde z_i = M^{-1/2} z_i$.
Then we have $\sum_{i=1}^q \tilde z_i \tilde z_i^* = I_p$
and $\Vert \tilde z_i \Vert_2^2 \le \delta/\alpha =: \delta^\prime < 1/100$.
By Corollary~B and Lemma~1 in \cite{NOU2016},
noting that the constant $C$ from Lemma~1 is at most 36,
we get a partition of $\{1,\hdots,q\}$ into sets $J_1,\hdots,J_t$ such that, for all $s\le t$,
we have
\[
25\, \delta^\prime\, I_p \,\le\, \sum_{i\in J_s} \tilde z_i \tilde z_i^* \,\le\, 3600\, \delta^\prime\, I_p.
\]
Now, using 
\[ \sum_{i\in J_s} z_i z_i^* = M^{1/2} \,\sum_{i\in J_s} \tilde z_i \tilde z_i^*\, M^{1/2},\]
we get the statement.
\end{proof}

Note that one could obtain 
better constants
by adapting 
the proof of Theorem 2.3 from \cite{NSU}.
In our case, we have $\delta=2m$, $\alpha=n/2$ and $\beta=3n/2$.
The relation $\alpha>100\,\delta$ is satisfied.
We thus obtain
\[
50\,m I_p \,\le\, \sum_{i\in J_s} z_i z_i^* \,\le\, 21 600\, m I_p.
\]
for every $J_s$ from the partition.
Moreover, the inequality
\[
 \frac{n}{2} I_p \,\le\, \sum_{i=1}^q z_i z_i^*
 \,=\, \sum_{s=1}^t \sum_{i\in J_s} z_i z_i^*
 \,\le\, 21 600\, t m I_p
\]
implies that one of the sets $J'=J_s$ from the partition must satisfy 
\[
  | J' \cap \{1,\dots,n\} | \,\le\, \frac{n}{t}\,\le\, 43200\,m.
\]

After applying Lemma \ref{lemma_subsample} and removing the indices 
from $J'\cap\{n+1,\dots,q\}$ corresponding to artificially added vectors, 
we are left with a set $J:=J'\cap\{1,\dots,n\}$ of cardinality
\[
|J| \,\le\, 43200\, m.
\]

It remains to show that the artificial vectors do not 
interfere with our desired properties. 
For this, recall that $(z_i)_k=(y_i)_k$ for $k<m$ and $i\leq n$, whereas 
the first $m$ entries of $z_i\in\C^p$ are zero for $i>n$. 
Hence,
\[
\(\sum_{i\in J}y_i y_i^*\)_{<m}
=\(\sum_{i\in J'}z_i z_i^*\)_{<m}
\,\ge\, 50 \,m\, I_m,
\]
where we use a simple linear algebra fact on self-adjoint matrices $A$:
\[
\lambda_{\min}(A_{<m})
=\inf_{\substack{z\in \C^p, \,\|z\|_2=1\\z_k=0\text{ for }k\geq m }}z^*Az 
\geq \inf_{z\in \C^p,\, \|z\|_2=1}z^*Az = \lambda_{\min}(A).
\]

Similarly, and using positive definiteness, we have
\[
\sum_{i\in J}z_i z_i^*
\leq \sum_{i\in J'}z_i z_i^*
\leq 21600\,m\,I_{p}.
\]
With the orthogonal transformation $U$ 
from Section~\ref{subsec:dimension}, we get
\[
\Big\|\sum_{i\in J} y_i y_i^*\Big\|_{2\to 2}
=\Big\|U\(\sum_{i\in J} z_i z_i^* \)U^*\Big\|_{2\to 2} 
\le \Big\|\sum_{i\in J}z_i z_i^*\Big\|_{2\to 2}
\leq 21600 \,m.
\]
This proves Proposition~\ref{prop:subsampling}. \\ \qed

\begin{remark} \label{rem:BSS}
It would be an interesting improvement to use
the result of Batson, Spielman and Srivastava, see~\cite{BSS}, 
instead of \cite{MSS} for the subsampling. 
This earlier paper is applied to approximation theory 
in e.g.\ \cite{LT,NOU2011,T18} and more recently in \cite{BSU}. 
It presents a slightly less powerful method, requiring additional weights, 
but comes with an almost linear algorithmic complexity, see \cite{LS}, 
and much smaller constants, which could make the bound presented here sharp 
also in terms of numerical values.
\end{remark}

\begin{remark}
We recently learned that it might be possible to use results from~\cite{FS}, 
which work directly in an infinite-dimensional setting, 
to save the reduction to a finite dimension in Section~\ref{subsec:dimension}. 
However, as the core of our method is~\cite{MSS}, we decided 
to keep our more direct deduction.
\end{remark}

\section{Proof of the main theorem}
\label{sec:proof}

We now have all the tools for proving Theorem \ref{main_theorem}. 

To obtain our sampling points,
we combine~\eqref{eq_concentration} for our initial vectors $y_i\in\ell_2(\II)$ 
with Proposition~\ref{prop:subsampling}.
Clearly, Proposition~\ref{prop:subsampling} stays true
if we replace $\N_0$ by the possibly finite index set $\II$. 
We obtain
points $x_1,\dots,x_n \in D_0$ with $n\le 43200\, m$ 
such that the vectors
\[
(y_i)_k=\left\{\begin{array}{cl} \rho_m(x_i)^{-1/2}\,b_k(x_i)&\text{ if } 0\le k<m,\\ 
\rho_m(x_i)^{-1/2}\,\gamma_m^{-1} \,\sigma_k\, b_{k}(x_i)&\text{ if }m\leq k<M,\end{array}\right.
\]
satisfy
\[
 \(\sum_{i=1}^n y_iy_i^*\)_{<m} \,\ge\,50\,m\,I,
\]
and 
\[
\(\sum_{i=1}^n y_iy_i^*\)_{\ge m} 
\,\le\, 21600\,m\,I,
\]
where we use the notation $A_{\ge m}=(A_{k,l})_{k,l\ge m}$
for a matrix $A$.

As in earlier papers, we use the \emph{weighted least squares estimator}
\[
A_{n}(f) \,:=\, 
\underset{g\in V_m}{\rm argmin}\, \sum_{i=1}^n \frac{\vert g(x_i) - f(x_i) \vert^2}{\rho_m(x_i)}
\]
with $V_m$ and $\rho_m$ as defined in Sections~\ref{RKHS_setting} 
and \ref{section_concentration}, respectively, see~\cite{KU1}.
This algorithm 
may be written as 
\[
A_{n}(f) \,=\, \sum_{k=1}^m (G^+ N f)_k\, b_k 
\]
where $N\colon F \to \C^n$ with 
$N(f):=\left(\rho_m(x_i)^{-1/2}f(x_i)\right)_{i\leq n}$ 
is the \emph{information mapping} and
$G^+\in \C^{m\times n}$ is the Moore-Penrose inverse of the matrix 
\[
 G := \left(\rho_m(x_i)^{-1/2}\,b_k(x_i)\right)_{i\le n,\, k\le m} 
\in \C^{n\times m}.
\]
Since we have the identity $\overline{G^*G}=(\sum_{i=1}^n y_iy_i^*)_{<m}$,
the matrix $G$ has full rank 
and the spectral norm of $G^+$ is bounded by $(50 m)^{-1/2}$.
In particular,
the argmin in the definition of $A_{n}$ is uniquely defined
and $A_n$ satisfies $A_{n}(f)=f$ for all $f\in V_m$. \\
Denoting with $Q_m$ the $L_2$-orthogonal projection onto
${\rm span}\{b_k \colon k\ge m\}$,
we obtain for any $f\in H$ that
\begin{align*}
\allowdisplaybreaks
\norm{f-A_{n}(f)}_{L_2}^2 
\,&=\,  \norm{f-P_mf}_{L_2}^2 + \norm{P_m  f - A_{n}(f)}_{L_2}^2 \\
\,&=\,  \norm{Q_mf}_{L_2}^2 + \norm{A_{n}(f- P_m f)}_{L_2}^2 \\
\,&=\, \norm{Q_mf}_{L_2}^2  + \norm{G^+ N (f- P_m f)}_{\ell_2^n}^2 \\
 &\le\, \sigma_m^2\,\norm{Q_mf}_{H}^2 +\norm{G^+}_{2\to2}^2 
\cdot \norm{N(f-P_m f)}_{\ell_2^m}^2.
\end{align*}
By \eqref{eq:Fourier-series} we have $N(f-P_mf)=\Phi \xi_f$, where
\[
 \Phi=\(\rho_m(x_i)^{-1/2}\sigma_k\, b_k(x_i)\)_{ i\le n,\,k\geq m}
 \quad\text{and} \quad \xi_f = (\langle f,\sigma_k \, b_k\rangle_H)_{k\geq m}.
\] 
The matrix $\Phi$ satisfies 
\[
\overline{{\Phi_{}^{*}\Phi}}  
\,=\, \gamma_m^2 \(\sum_{i=1}^n y_iy_i^*\)_{\ge m}
\]
and therefore its spectral norm is bounded by 
$\(21600\,m \,\gamma_m^2\)^{1/2}$.
Thus,
\[
 \norm{N(f-P_m f)}_{\ell_2^m}^2 \,\le\, 21600\,m \,\gamma_m^2 \Vert \xi_f\Vert_2^2
 \,=\, 21600\,m \,\gamma_m^2\, \Vert Q_m f\Vert_H^2.
\]
In summary, we obtain for all $1\le m <M$ the bound
\begin{equation}\label{eq:error-A}
\norm{f-A_{n}(f)}_{L_2}^2
\,\leq\, 433 \,
\max\bigg\{\sigma_m^2, \, \frac{1}{m}\sum_{k\geq m}\sigma_k^2 \, \bigg\}
\, \norm{Q_m f}_{H}^2.
\end{equation}
for all $f\in H$ and some $n\le 43200 \,m$.
Taking the supremum over $f\in F$ and using that
\[
\max\bigg\{\sigma_m^2, \, \frac{1}{m}\sum_{k\geq m}\sigma_k^2 \, \bigg\}
\,\le\,\frac{2}{m}\sum_{k\geq\lceil m/2\rceil}\sigma_k^2,
\]
we obtain 
\[
g_{43200\,m}(F)^2 \,\le\, \frac{866}{m}\sum_{k\geq \lceil m/2\rceil} \sigma_k^2. 
\]
This finishes the proof of Theorem~\ref{main_theorem} with 
$c=43200\cdot 866 $. \\ \qed

In fact, equation \eqref{eq:error-A} provides
a local upper bound which is sometimes superior to Theorem~\ref{main_theorem}.
We therefore state it separately.

\begin{theorem}
\label{main_theorem_local}
Let $\mu$ be a measure on a set $D$ 
and let $F\subset L_2(\mu)$ be the unit ball of a separable RKHS $H$ 
such that the finite trace assumption~\eqref{eq:trace} holds.
For $m\in \N$,
let $P_m$ be the orthogonal projection onto the span $V_m$
of the singular vectors corresponding to the $m$ largest singular values
of the embedding of $H$ into $L_2$.
Then there exist $x_1,\hdots,x_n \in D$
and $\varphi_1,\hdots,\varphi_n \in V_m$, where $n \le 43200 m$,
such that, for all $f\in H$,
\begin{equation*}
\Big\|f - \sum_{i=1}^n f(x_i)\, \varphi_i\Big\|_{L_2}^2
\leq 433 
\max\bigg\{d_m(F)^2, \, \frac{1}{m}\sum_{k\geq m}d_k(F)^2 \bigg\}
 \big\Vert f - P_m f\big\Vert_{H}^2.
\end{equation*}
\end{theorem}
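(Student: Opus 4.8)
The plan is to assemble Theorem~\ref{main_theorem_local} directly from the machinery that was already developed in Sections~\ref{RKHS_setting}--\ref{sec:proof}, essentially by observing that the computation leading to \eqref{eq:error-A} proves the slightly stronger statement before the final supremum is taken. Concretely, I would first dispose of the trivial cases: if $M=\min\{m:d_m(F)=0\}$ is finite and $m\ge M$, then $P_m$ is already the identity on $H$ (since $V_m\supseteq\Span\{b_k:k\in\II\}$ spans $H$ up to null sets), so $f-P_mf=0$ in $L_2$ and we may take $n=0$; hence it suffices to treat $1\le m<M$, exactly the range fixed after the lemma in Section~\ref{RKHS_setting}.

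For $1\le m<M$, I would combine the concentration step and the subsampling step exactly as in Section~\ref{sec:proof}: apply Lemma~\ref{lemma_concentration} with $n=\lfloor C_0 m\log(m+1)\rfloor$ to obtain a realization satisfying \eqref{eq_concentration}, then apply Proposition~\ref{prop:subsampling} (with the index set $\N_0$ replaced by $\II$) to extract $J\subset\{1,\dots,n\}$ with $|J|\le 43200\,m$ and the two Loewner bounds $(\sum_{i\in J}y_iy_i^*)_{<m}\ge 50\,m\,I$ and $(\sum_{i\in J}y_iy_i^*)_{\ge m}\le 21600\,m\,I$. Relabel the points in $J$ as $x_1,\dots,x_n$ with $n\le 43200\,m$. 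Then define the weighted least squares estimator $A_n$ and the functions $\varphi_k := \sum_{i=1}^n (G^+)_{k,i}\,\rho_m(x_i)^{-1/2}\,b_k$-type coefficients; more precisely, $A_n(f)=\sum_{i=1}^n f(x_i)\varphi_i$ where $\varphi_i=\sum_{k=1}^m(G^+)_{k,i}\rho_m(x_i)^{-1/2}b_k\in V_m$, so the algorithm genuinely has the sampling form required by the theorem statement. The chain of (in)equalities already displayed in Section~\ref{sec:proof} --- the Pythagorean splitting $\|f-A_n f\|_{L_2}^2=\|Q_mf\|_{L_2}^2+\|A_n(f-P_mf)\|_{L_2}^2$, the reproducing identity $A_nf=f$ on $V_m$, the bounds $\|G^+\|_{2\to2}^2\le(50m)^{-1}$ and $\|\Phi\|_{2\to2}^2\le 21600\,m\,\gamma_m^2$, and the factorization $N(f-P_mf)=\Phi\xi_f$ with $\|\xi_f\|_2=\|Q_mf\|_H$ --- then yields \eqref{eq:error-A} verbatim, which is precisely the claimed inequality once we write $\sigma_k=d_k(F)$ and $\|Q_mf\|_H=\|f-P_mf\|_H$.

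The only genuine subtlety is bookkeeping rather than a real obstacle: one must check that the vectors $y_i$ here lie in $\ell_2(\II)$ with $\II$ possibly finite and that Proposition~\ref{prop:subsampling} applies in that setting (it does, since a finite index set is a special case of a subset of $\N_0$, as noted in Section~\ref{sec:proof}), and one must verify that the resulting $\varphi_i$ really land in $V_m$ --- which is immediate from $A_n(f)\in V_m$ by construction of the argmin over $g\in V_m$. I expect no conceptual difficulty here at all: Theorem~\ref{main_theorem_local} is strictly a restatement of the intermediate bound \eqref{eq:error-A} together with the observation that the least squares estimator has the required interpolation form $f\mapsto\sum f(x_i)\varphi_i$ with $\varphi_i\in V_m$, so the ``proof'' is essentially a pointer back to Section~\ref{sec:proof} with the two trivial cases $m\ge M$ handled separately.
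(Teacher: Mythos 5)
Your main argument is exactly the paper's: Theorem~\ref{main_theorem_local} is nothing but the intermediate bound \eqref{eq:error-A}, read off before the supremum over $F$ is taken, together with the observation that the weighted least squares estimator has the form $A_n(f)=\sum_{i=1}^n f(x_i)\varphi_i$ with $\varphi_i=\rho_m(x_i)^{-1/2}\sum_{k<m}(G^+)_{k,i}\,b_k\in V_m$ and that $\Vert Q_mf\Vert_H=\Vert f-P_mf\Vert_H$, $\sigma_k=d_k(F)$. For $1\le m<M$ your chain of estimates, the constant $\sigma_m^2+\tfrac{21600}{50}\gamma_m^2\le 433\gamma_m^2$, and the appeal to Proposition~\ref{prop:subsampling} over the possibly finite index set $\II$ are all correct and identical to Section~\ref{sec:proof}.

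The one genuine error is your treatment of the degenerate case $m\ge M$. You claim that since $f=P_mf$ one ``may take $n=0$.'' This fails: for $m\ge M$ both $d_m(F)$ and $\frac1m\sum_{k\ge m}d_k(F)^2$ vanish, so the right-hand side of the asserted inequality is $0$ for \emph{every} $f\in H$ (the factor $\Vert f-P_mf\Vert_H^2=0$ does not rescue you, because the prefactor is already $0$). The inequality therefore demands \emph{exact} recovery $f=\sum_{i=1}^n f(x_i)\varphi_i$ in $L_2$, and with $n=0$ the left-hand side is $\Vert f\Vert_{L_2}^2\neq 0$ in general. The correct resolution is the interpolation lemma of Section~\ref{RKHS_setting}: it produces $M\le m\le 43200\,m$ points $x_0,\dots,x_{M-1}$ and functions $\varphi_i$ with $f=\sum_{i<M}f(x_i)\varphi_i$ almost everywhere for all $f\in H$, and its proof shows $\varphi_i\in\Span\{b_0,\dots,b_{M-1}\}=V_M\subseteq V_m$, so these points and weights satisfy the theorem with left-hand side equal to zero. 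With that substitution your proof is complete and coincides with the paper's.
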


\medskip

\begin{remark}
For the purpose of Theorem~\ref{main_theorem} it  
was enough
to bound 
$\norm{f-P_mf}_H \le \norm{f}_H$.
However, Theorem~\ref{main_theorem_local}
will be of advantage later for the study of 
general classes
since it is able to see additional decay 
of the Fourier coefficients $\langle f, b_k\rangle_{L_2}$
compared to the decay implied by $f\in H$.
Note that faster decay of the Fourier coefficients often corresponds 
to higher smoothness of the 
function.
In a certain sense, this means that the algorithm 
is universal.
The error has the optimal rate of decay for any smoothness higher 
than the smoothness of $H$.
\end{remark}

\begin{remark}
The condition on the point sets can also be given 
by finite matrices that are related to the kernel $K$ of the Hilbert space.
For this, 
let us define 
$K_m(x,y):=\sum_{k< m} b_k(x) b_k(y)$, 
and 
$R_m(x,y):=\sum_{k\ge m} \sigma_k^2\, b_k(x) b_k(y)$.  
The non-zero singular values of 
$GG^*$ 
are the same as those of $G^*G$, 
and the non-zero singular values of 
$\Phi\Phi^*$ 
are the same as those of~$\Phi^*\Phi$,
where $G$ and $\Phi$ are from above. 
Hence, 
the algorithm $A_n$ based on points $x_1,\dots,x_n$ satisfies 
the error bound above (up to a constant) if
\[
c\,m \,\le\, \lambda_m\(GG^*\) 
\,=\, \lambda_m\(\(\frac{K_m(x_i,x_j)}{\sqrt{\rho_m(x_i)\rho_m(x_j)}} \)_{i,j=1}^n\)
\]
and
\[
\(\frac{R_m(x_i,x_j)}{\sqrt{\rho_m(x_i)\rho_m(x_j)}} \)_{i,j=1}^n \,=\,
\Phi\Phi^* \,\le\, C \, m\,\gamma_m^2\,I
\]
for some constants $c,C>0$, where 
$\lambda_m$ denotes the $m$th eigenvalue. 
It would be interesting to find a property that only involves 
the kernel $K$ directly (instead of the truncated kernels above), 
or to verify that a similar property  
characterizes \emph{good} point sets, 
in a way similar to Proposition~1 of~\cite{HKNVa} for 
integration. 
\end{remark}

\subsection{Proof of Corollary~\ref{cor:Hilbert}}
\label{sec:polynomial}

For the given bounds on the sampling numbers for sequences of 
polynomial decay, we only need to note that
\[
 \frac1n \sum_{k\ge n} k^{-a} \log^{-b} k \,\lesssim\, \begin{cases} n^{-a} \log^{-b} n
  \quad &\text{if}\ a>1, \, b\in\R,\\[2pt]
 n^{-a} \log^{-b+1} n
\quad &\text{if}\ a=1,\, b>1.
\end{cases}
\]
Hence, Corollary~\ref{cor:Hilbert} immediately follows from Theorem~\ref{main_theorem},
and the existence of $F$ where the bounds are attained comes from \eqref{eq:lower}, see \cite{HNKV}.\\ \qed

\section{General function classes}
\label{sec:proof-general}

We now prove all results related to general function classes.

\subsection{Proof of Theorem~\ref{thm:non-Hilbert}}

We will make use of 
the following observation from \cite[Lemma 3]{KU2}.
We copy its proof 
for completeness.

\begin{lemma}\label{lem:basis}
Let $F \subset L_2$ and let $L_2$ be infinite-dimensional.
There is an orthonormal system 
$(b_k)_{k\in \N_0}$
in $L_2$ such that for all $m\ge 1$,
the orthogonal projection $P_m$ onto
$V_m= \Span\{b_k\co k<m\}$ 
satisfies
\begin{equation}
\label{eq:projections}
 \sup_{f\in F} \Vert f - P_m f \Vert_{L_2}  
\,\le\, 2\, d_{\lfloor m/4\rfloor}(F).
\end{equation}
\end{lemma}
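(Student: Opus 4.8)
The plan is to construct the orthonormal system $(b_k)_{k \in \N_0}$ greedily in blocks, using near-optimal subspaces for a geometric sequence of indices. Concretely, for each $j \ge 0$ let $W_j \subset L_2$ be a subspace of dimension at most $2^j$ witnessing the Kolmogorov width $d_{2^j}(F)$ up to a factor, say $\sup_{f \in F} \Vert f - \Pi_{W_j} f\Vert_{L_2} \le 2\, d_{2^j}(F)$, where $\Pi_{W_j}$ is the orthogonal projection onto $W_j$ (strictly speaking $d_k(F)$ is defined via general linear maps $\ell_i$, but by Remark~\ref{rem:Kolmogorov} the infimum over subspaces of dimension $k$ gives the same value, and the factor $2$ absorbs the case where the infimum is not attained). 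I would then set $U_0 = W_0$ and inductively $U_{j} = U_{j-1} + W_j$, so that $U_j \supset W_j$, $U_j$ is increasing, and $\dim U_j \le 1 + 2 + 4 + \dots + 2^j < 2^{j+1}$. Choosing an orthonormal basis of $L_2$ adapted to the flag $U_0 \subset U_1 \subset U_2 \subset \dots$ — that is, first an orthonormal basis $b_0, \dots$ of $U_0$, then extending to $U_1$, then to $U_2$, and so on, and finally extending to all of $L_2$ (possible since $L_2$ is separable and infinite-dimensional) — produces the desired system $(b_k)_{k \in \N_0}$.

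The verification of \eqref{eq:projections} then goes as follows. Fix $m \ge 1$ and let $j \ge 0$ be the largest integer with $2^{j+1} \le m$ — if no such $j$ exists, i.e. $m < 2$, then $m = 1$ and $\lfloor m/4 \rfloor = 0$, and the bound is trivial since $d_0(F) \ge \sup_{f \in F}\Vert f\Vert_{L_2} \ge \sup_{f\in F}\Vert f - P_1 f\Vert_{L_2}$ provided $d_0(F)$ is interpreted as the ``width'' with no functionals; I would handle this degenerate case separately or simply note the statement is for $m \ge 1$ and reduce to $m \ge 2$. For $m \ge 2$, since $\dim U_j < 2^{j+1} \le m$, the flag construction guarantees that the first $2^{j+1}$ basis vectors $b_0, \dots, b_{2^{j+1}-1}$ span a space containing $U_j \supseteq W_j$; hence $V_m = \Span\{b_k : k < m\} \supseteq \Span\{b_k : k < 2^{j+1}\} \supseteq W_j$. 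Because $V_m \supseteq W_j$, the orthogonal projection satisfies $\Vert f - P_m f\Vert_{L_2} \le \Vert f - \Pi_{W_j} f \Vert_{L_2} \le 2\, d_{2^j}(F)$ for every $f \in F$. Finally, by maximality of $j$ we have $2^{j+1} > m/2$, hence $2^j > m/4$, so $2^j \ge \lfloor m/4 \rfloor + 1 > \lfloor m/4 \rfloor$, and since $d_k(F)$ is non-increasing in $k$, $d_{2^j}(F) \le d_{\lfloor m/4\rfloor}(F)$. Combining the two displays gives $\sup_{f\in F}\Vert f - P_m f\Vert_{L_2} \le 2\, d_{\lfloor m/4\rfloor}(F)$, as claimed.

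The only genuinely delicate point is ensuring the nesting of the spans of initial segments of $(b_k)$ with the approximation spaces $W_j$; this is exactly what the flag construction $U_0 \subset U_1 \subset \dots$ arranges, at the cost of replacing the index $2^j$ by its partial-sum dimension $< 2^{j+1}$, which is why a constant factor ($4$ in the index, $2$ in the value) is unavoidable and appears in the statement. Everything else — monotonicity of widths, the variational characterization of orthogonal projections as best approximants, and separability of $L_2$ to complete the basis — is routine. I would present the argument in this order: (1) fix block subspaces $W_j$ and form the increasing flag $U_j$; (2) choose the adapted orthonormal basis; (3) for given $m$ pick the appropriate block index $j$ and chain the inclusions $W_j \subseteq U_j \subseteq V_m$; (4) bound the projection error by the width $d_{2^j}(F)$ and then by $d_{\lfloor m/4\rfloor}(F)$ via monotonicity.
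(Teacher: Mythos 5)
Your proof is correct and is essentially the paper's argument: the paper likewise takes near-optimal subspaces $W_{2^\ell}$ at dyadic indices, defines $U_m$ as the span of those with $2^\ell\le m/2$ (so $\dim U_m\le m$ and $U_m$ contains some $W_k$ with $k\ge\lfloor m/4\rfloor$), and concludes by monotonicity of projections onto nested subspaces. Your flag/adapted-basis step just makes explicit the reduction that the paper states in its first sentence, and the bookkeeping with $j$ versus $m$ is an equivalent reindexing.
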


\begin{proof}
Clearly it is enough to find an increasing sequence of 
subspaces of $L_2$,
 \[
  U_1 \subseteq U_2 \subseteq U_3 \subseteq \hdots,
  \qquad
  \dim(U_m) \le m,
 \]
 such that the projection $P_m$ onto $U_m$ satisfies \eqref{eq:projections}.
 By the definition of $d_k(F)$, $k\in\N_0$, 
 there is a subspace $W_k\subset L_2$ of dimension~$k$
and a mapping $T_k\colon F \to W_k$
 such that 
 \[
  \sup_{f\in F} \Vert f - T_k f \Vert_{L_2} \,\le\,  2\, d_k(F).
 \]
 This is also true if $d_k(F)=0$.
 We let $U_m$ be the space that is spanned by the union of the 
 spaces $W_{2^\ell}$ over all $\ell\in \N_0$ such that $2^\ell\le m/2$. 
 Note that $U_m$ contains a subspace 
 $W_k$ with $k\ge \lfloor m/4\rfloor$.
 Therefore, 
 $P_m f$ is at least as close to $f$ as $T_k f$ for some $k\ge \lfloor m/4\rfloor$,
 which implies~\eqref{eq:projections}.
\end{proof}

We now turn to the proof of Theorem~\ref{thm:non-Hilbert}.
The basic idea is to construct a suitable reproducing kernel Hilbert space $H$
that contains a dense subset of $F$
and apply Theorem~\ref{main_theorem_local} to this Hilbert space.
It will be important to use the local bound from Theorem~\ref{main_theorem_local} 
instead of the global bound from Theorem~\ref{main_theorem}.

\begin{proof}[Proof of Theorem~\ref{thm:non-Hilbert}]

Without loss of generality, we assume that $L_2$ is infinite-dimensional.
Moreover, we assume that $d_k(F)$ is finite for $k\ge k_0$ and that $(d_k(F))_{k\ge k_0}\in\ell_p$. 
Otherwise, the statement is trivial.

By Lemma~\ref{lem:basis},
there is an orthonormal system 
$(b_k)_{k\in \N_0}$ such that \eqref{eq:projections} is satisfied for all $m\in\N$.
We will 
consider $b_k$ as a function,
where we fix an arbitrary representer from the equivalence class in $L_2$.
We call $$\hat f(k) := \scalar{f}{b_k}_{L_2}$$
the $k$th Fourier coefficient of $f$.
Moreover, we fix a countable dense subset $F_0$ of $F$
and set $\sigma_k=\max\{1,k\}^{-\alpha}$ for all $k\in \N_0$
and some $\alpha\in(1/2,1/p)$. 
Then we have $(\sigma_k)\in\ell_2$.

We now want to define a RKHS on a set $D_0 \subset D$,
with $\mu(D\setminus D_0)=0$, which admits the orthonormal basis $(\sigma_k b_k)$
and contains the set $F_0$.
Such a Hilbert space will have the reproducing kernel 
\[
 K(x,y) \,=\, \sum_{k\in\N_0} \sigma_k^2 b_k(x) \overline{b_k(y)}.
\]
To find a suitable set $D_0$, we first note that 
\begin{equation}\label{eq:traceofH}
 \int_D K(x,x)\,d\mu(x) \,=\, \sum_{k\in\N_0} \sigma_k^2 < \infty
\end{equation}
and thus $K(x,x)$ is finite for all $x\in D\setminus E$
with a null set $E\subset D$.
Moreover, for all $f\in F_0$, we have
\[
 \sum_{k\ge 1} k\, |\hat f(k)|^2 
 \,=\, \sum_{n\ge 0} \sum_{k>n} |\hat f(k)|^2
 \,=\, \sum_{n\ge 0} \Vert f - P_n f \Vert_{L_2}^2 < \infty,
\]
where we use \eqref{eq:projections} and the assumptions on $F$.
The Rademacher-Menchov Theorem, see e.g.~\cite{S41}, now implies that 
the Fourier series of $f$ at $x$ converges to $f(x)$ 
for all $x\in D\setminus E_f$ with a null set $E_f\subset D$.
We put 
$D_0 := D \setminus E_0$, where
$
 E_0 := E \cup \bigcup_{f\in F_0} E_f
$
is a null set.
Then
for all $x\in D_0$ and $f\in F_0$, we have
\[
 K(x,x) \,< \infty
\quad \text{and} \quad
 f(x) \,=\, \sum_{k\in \N_0} \hat f(k)\, b_k (x).
\]

We now define the space $H$ as the set of all square-integrable functions $f\colon D_0 \to \C$
which are point-wise represented by their Fourier series
$\sum_k \hat f (k) \, b_k$
and which satisfy
\[
 \Vert f \Vert_H^2 \,:=\, \sum_{k\in\N_0} \frac{|\hat f(k)|^2}{\sigma_k^2} < \infty.
\]
Then $H$ is a separable reproducing kernel Hilbert space on $D_0$
since
\[
|f(x)|^2\leq K(x,x)\|f\|_H^2\quad\text{ for all }x\in D_0\text{ and }f\in H,
\]
and $(\sigma_k b_k)_{k\in\N_0}$ is an orthonormal basis of $H$.
The reproducing kernel is $K$, which
has finite trace from \eqref{eq:traceofH}.

We now show that $F_0$ (with functions restricted to $D_0$) is a subset of $H$.
Recall that any $f\in F_0$ is point-wise represented by its Fourier series. Moreover,
note that the Kolmogorov widths of $F_0$ and $F$ are the same.
We use 
\[
 d_{2m}(F) = \big(d_{2m}(F)^p\big)^{1/p} \le \bigg( \frac1m \sum_{k\ge m} d_k(F)^p \bigg)^{1/p}
\]
and obtain for any $m\in 8 \N$ and $f\in F_0$ that
\begin{equation*}
\begin{split}
 \Vert f - P_m f \Vert_H^2
 &= \sum_{k\ge m} k^{2\alpha} \, |\hat f(k)|^2 
	\le \sum_{\ell\in\N_0} \left(m2^{\ell+1}\right)^{2\alpha}  
	\sum_{k=m2^\ell}^{m2^{\ell+1}-1} |\hat f(k)|^2 \\
 &\le 4 \sum_{\ell\in\N_0} \left(m2^{\ell+1}\right)^{2\alpha}  d_{m 2^{\ell-2}}(F)^2\\
 &\le 4 \sum_{\ell\in\N_0} \left(m2^{\ell+1}\right)^{2\alpha} 
 \bigg( \frac{1}{m2^{\ell-3}} \sum_{k\ge m2^{\ell-3}} d_k(F)^p \bigg)^{2/p} \\
 &\leq\, 2^{2+2\alpha+6/p}m^{2\alpha-2/p}\sum_{\ell\in \N_0}2^{(2\alpha-2/p)\ell} 
	\bigg(\sum_{k\ge m/8} d_k(F)^p \bigg)^{2/p}.
\end{split}
\end{equation*}
The last expression is finite for $m\ge 8 k_0$,
since $2\alpha-2/p<0$.
This implies that $f\in H$ and
\begin{equation}
\label{eq1}
 \Vert f - P_m f \Vert_H
\,\leq\, C\, m^\alpha
	\bigg(\frac1m\sum_{k\ge m/8} d_k(F)^p \bigg)^{1/p} ,
\end{equation}
where $C>0$ only depends on $p\in(0,2)$ and $\alpha\in (\frac{1}{2},\frac{1}{p})$.

We now apply Theorem~\ref{main_theorem_local}
to the newly constructed Hilbert space $H$ to
find $n\le 43200\,m$ and a linear algorithm $A_n$ of the form
\[
 A_n(f) \,=\, \sum_{i=1}^n f(x_i) g_i, \quad x_i \in D_0, \ g_i \in L_2,
\]
such that
\begin{equation}\label{eq:xyz}
 \Vert f - A_n f \Vert_{L_2(D_0,\mu)}^2 
 \,\le\, 433 \,
\max\bigg\{\sigma_m^2, \, \frac{1}{m}\sum_{k\geq m}\sigma_k^2 \, \bigg\}
\, \Vert f - P_m f \Vert_{H}^2
\end{equation}
for all $f\in H$ and thus, for all $f\in F_0$.
Clearly, in the last inequality, $D_0$ can be replaced with $D$.
If we now insert the estimate~\eqref{eq1} and the estimate
\begin{equation}\label{eq2}
\max\bigg\{\sigma_m^2, \, \frac{1}{m}\sum_{k\geq m}\sigma_k^2 \, \bigg\}
 \,\lesssim\, m^{-2\alpha},
\end{equation}
into \eqref{eq:xyz}, we obtain that
\[
\Vert f-A_n f\Vert_{L_2}^2 
\,\le\, 
\bigg(\frac{\tilde c_p}{m} \sum_{k \ge m/8} d_k(F)^p\bigg)^{2/p}
\]
for all $f\in F_0$ and some $\tilde c_p>0$ that only depends on $p$.
Since $F_0$ is dense in $F$ and
both ${\rm id} \colon F \to L_2$ and $A_n \colon F \to L_2$
are continuous,
the last bound is true for all $f\in F$. 
This finishes the proof of Theorem~\ref{thm:non-Hilbert} 
with $c_p=43200 \, \max(\tilde c_p,8)$. \\ 
\end{proof}

\subsection{The boundary case}\label{sec:boundary}

We provide a variant of Theorem~\ref{thm:non-Hilbert} under a weaker 
condition than $(d_k(F))\in\ell_p$ for $p<2$.
In fact, we show that the condition
$\big((\log k)^s d_k(F)\big)\in\ell_2$ for some $s>1/2$
is enough for a comparison
of the sampling and the Kolmogorov widths,
while the same assumption for $s=1/2$ is not enough, see Example~\ref{ex2}.

\begin{theorem}\label{thm:non-Hilbert-boundary}
 Let $s>1/2$. There is a universal constant $c\in \N$
 and a constant $c_s>0$, depending only on $s$,
 such that for every $F$ and $\mu$ that satisfy \hyperlink{assum}{Assumption~A}
 and all $m\ge 2$,
 \[
 g_{cm}(F)^2 \,\le\, 
 c_s\, m^{-1} \log^{-2s+1} m \sum_{k \ge m} d_k(F)^2 \cdot \log^{2s} k.
 \]
\end{theorem}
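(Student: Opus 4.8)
The plan is to mimic the proof of Theorem~\ref{thm:non-Hilbert}, but replacing the polynomially-decaying weights $\sigma_k = \max\{1,k\}^{-\alpha}$ by logarithmically-decaying ones adapted to the hypothesis. Concretely, I would fix a parameter $t$ with $1/2 < t < s$ and set $\sigma_k := \max\{2,k\}^{-1/2}\,(\log\max\{2,k\})^{-t}$. Since $2t>1$, the sequence $(\sigma_k)$ is square-summable, so the associated kernel $K(x,y)=\sum_k \sigma_k^2\,b_k(x)\overline{b_k(y)}$ has finite trace and the construction of the RKHS $H$ on a conull set $D_0$ goes through verbatim as in the proof of Theorem~\ref{thm:non-Hilbert} — using Lemma~\ref{lem:basis} to get the orthonormal system $(b_k)$ with $\sup_{f\in F}\|f-P_mf\|_{L_2}\le 2d_{\lfloor m/4\rfloor}(F)$, and the Rademacher--Menchov theorem to get pointwise convergence of Fourier series for a dense countable $F_0\subset F$. (One should check that $\sum_k k\,|\hat f(k)|^2<\infty$, which still follows from $(d_k(F))\in\ell_2$, guaranteed here since $((\log k)^s d_k(F))\in\ell_2$ with $s>0$.)

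The two computations that must be redone are: (i) the bound on $\|f-P_mf\|_H$ for $f\in F_0$, and (ii) the bound on $\max\{\sigma_m^2,\frac1m\sum_{k\ge m}\sigma_k^2\}$. For (ii), an elementary estimate gives $\frac1m\sum_{k\ge m}\sigma_k^2 \asymp \frac1m\sum_{k\ge m} k^{-1}(\log k)^{-2t} \asymp m^{-1}(\log m)^{-2t+1}$, and this dominates $\sigma_m^2 = m^{-1}(\log m)^{-2t}$, so the max is $\asymp m^{-1}(\log m)^{-2t+1}$. For (i), I would dyadically decompose: $\|f-P_mf\|_H^2 = \sum_{k\ge m} k\,(\log k)^{2t}|\hat f(k)|^2 \le \sum_{\ell\ge 0}(m2^{\ell+1})(\log(m2^{\ell+1}))^{2t}\sum_{k=m2^\ell}^{m2^{\ell+1}-1}|\hat f(k)|^2$, and then bound the inner block sum by $4\,d_{m2^{\ell-2}}(F)^2$ via \eqref{eq:projections}. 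The key new point is to reinsert the hypothesis: write $d_{m2^{\ell-2}}(F)^2 = (\log(m2^{\ell-2}))^{-2s}\cdot d_{m2^{\ell-2}}(F)^2(\log(m2^{\ell-2}))^{2s}$, bound the last factor crudely by $\sum_{k\ge m/8}d_k(F)^2(\log k)^{2s}$, and observe that the remaining $\ell$-sum $\sum_{\ell\ge 0}(m2^{\ell+1})(\log(m2^{\ell+1}))^{2t}(\log(m2^{\ell-2}))^{-2s}$ — hmm, this has a growing factor $2^\ell$, which is the obstacle.

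The resolution, and the main subtlety, is that the factor $m2^{\ell}$ growing with $\ell$ must be absorbed by the faster decay of $d_{m2^{\ell-2}}(F)^2$ across dyadic blocks, which is \emph{not} available pointwise for a single $f$ — we only know $\sum_{k\ge m/8}d_k(F)^2(\log k)^{2s}<\infty$, not a pointwise rate. So instead of pulling $\sum_{k\ge m/8}$ out of the $\ell$-sum, I should keep it inside: bound $d_{m2^{\ell-2}}(F)^2 \le \frac{1}{m2^{\ell-3}}\sum_{k\ge m2^{\ell-3}}d_k(F)^2 \le \frac{1}{m2^{\ell-3}}(\log(m2^{\ell-3}))^{-2s}\sum_{k\ge m2^{\ell-3}}d_k(F)^2(\log k)^{2s}$. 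Now the $m2^\ell$ in the numerator cancels against $\frac{1}{m2^{\ell-3}}$, leaving a convergent geometric-type factor $(\log(m2^{\ell+1}))^{2t}(\log(m2^{\ell-3}))^{-2s}$ summable over $\ell$ because $t<s$ (for $\ell$ large this is $\lesssim \ell^{2t-2s}$, summable). Summing over $\ell$ then yields $\|f-P_mf\|_H^2 \lesssim_s (\log m)^{-2s+2t}\sum_{k\ge m/8}d_k(F)^2(\log k)^{2s}$, possibly with a mild extra logarithmic factor from the $\ell$-sum near $\ell=0$ that I would absorb. Multiplying this by the quantity from (ii), the $(\log m)^{2t}$ cancels: $\|f-A_nf\|_{L_2}^2 \lesssim m^{-1}(\log m)^{-2t+1}\cdot (\log m)^{-2s+2t}\sum_{k\ge m/8}d_k(F)^2(\log k)^{2s} \asymp m^{-1}(\log m)^{-2s+1}\sum_{k\ge m/8}d_k(F)^2(\log k)^{2s}$. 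Finally I would pass from the bound with $\sum_{k\ge m/8}$ to one with $\sum_{k\ge m}$ by rescaling $m$ (replace $m$ by $8m$, absorbing the constant into $c$ and $c_s$), and extend from $F_0$ to $F$ by density and continuity of $\mathrm{id}\colon F\to L_2$ and $A_n\colon F\to L_2$, exactly as at the end of the proof of Theorem~\ref{thm:non-Hilbert}. The hard part is precisely the bookkeeping in the dyadic sum (i): getting the $2^\ell$ to cancel and confirming the residual $\ell$-series converges, which needs the strict inequality $t<s$.
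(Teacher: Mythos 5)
Your overall framework (build the auxiliary RKHS via Lemma~\ref{lem:basis} and Rademacher--Menchov, apply Theorem~\ref{main_theorem_local}, conclude by density) is exactly right, and your estimate (ii) is correct. But the dyadic bookkeeping in (i) has a genuine gap, and you have in fact located it yourself without resolving it. The residual $\ell$-series is $\sum_{\ell\ge 0}\bigl(\log(m2^{\ell+1})\bigr)^{2t}\bigl(\log(m2^{\ell-3})\bigr)^{-2s}\asymp\sum_{\ell\ge 0}(\log m+\ell)^{-(2s-2t)}$, and a series $\sum_\ell \ell^{-\gamma}$ converges only when $\gamma>1$, not when $\gamma>0$. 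So you need $s-t>1/2$, which together with the constraint $t>1/2$ (forced by $(\sigma_k)\in\ell_2$, i.e., finite trace) restricts you to $s>1$; for $1/2<s\le 1$ no admissible $t$ exists and the argument breaks down. Moreover, even when $s-t>1/2$, the sum evaluates to $\asymp(\log m)^{2t-2s+1}$ rather than $(\log m)^{2t-2s}$: the extra $\log m$ comes from the whole tail of the $\ell$-sum, not from ``near $\ell=0$'', and cannot be absorbed. Carried through, it yields $g_{cm}(F)^2\lesssim m^{-1}\log^{2-2s}m\sum_{k\ge m}d_k(F)^2\log^{2s}k$, one logarithm worse than the claim. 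The source of the loss is the intermediate step $d_{m2^{\ell-2}}(F)^2\le\frac{1}{m2^{\ell-3}}\sum_{k\ge m2^{\ell-3}}d_k(F)^2$: each $d_k^2$ is then counted in $\asymp\log(k/m)$ overlapping tails, which is precisely the logarithmic overcount.

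The paper's proof avoids both problems by taking $\sigma_k=k^{-1/2}\log^{-s}k$ with the \emph{same} exponent $s$ (no auxiliary $t$) and replacing the dyadic decomposition with exact summation by parts: since $k\log^{2s}(k)\le\sum_{m\le r\le 2k}\log^{2s}(r)$, one gets
\[
\Vert f-P_mf\Vert_H^2=\sum_{k\ge m}|\hat f(k)|^2\,k\log^{2s}(k)
\le\sum_{r\ge m}\log^{2s}(r)\sum_{k\ge r/2}|\hat f(k)|^2
\le 4\sum_{r\ge m}\log^{2s}(r)\,d_{\lfloor r/8\rfloor}(F)^2,
\]
and re-indexing over $r$ with $\lfloor r/8\rfloor=k$ gives $\le 32\sum_{k\ge\lfloor m/8\rfloor}\log^{2s}(8k+7)\,d_k(F)^2$, i.e., exactly the hypothesis quantity with no logarithmic loss and no convergence issue for any $s>1/2$. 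The point is that the tail bound $\sum_{k\ge r/2}|\hat f(k)|^2\le 4d_{\lfloor r/8\rfloor}(F)^2$ is applied at every integer $r$ and summed against the weights $\log^{2s}(r)$ directly, never passing through an averaged bound on a single Kolmogorov width. Multiplying by your (correct) estimate (ii) with $t=s$ then gives the stated inequality after the rescaling $m\mapsto 8m$, exactly as you propose for the endgame.
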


\begin{proof}
The proof follows the same lines as the proof of Theorem~\ref{thm:non-Hilbert}.
The only difference is that we now choose $\sigma_k=k^{-1/2}\log^{-s}k$ for $k\ge 2$.
Then, inequality~\eqref{eq1} becomes
\[
\begin{split}
 \Vert f - P_m f \Vert_H^2
 &= \sum_{k\ge m} |\hat f(k)|^2 k\log^{2s}(k)
 \le \sum_{k\ge m} |\hat f(k)|^2 \sum_{m\le r\le 2k} \log^{2s}(r) \\
 &\le \sum_{r\ge m} \log^{2s}(r) \sum_{k\ge r/2}\,|\hat f(k)|^2
 \le 4 \sum_{r\ge m} \log^{2s}(r)\, d_{\lfloor r/8\rfloor}(F)^2 \\
 &\le 32 \sum_{k\ge \lfloor m/8 \rfloor} \log^{2s}(8k+7)\, d_k(F)^2.
\end{split}
\]
Likewise, inequality~\eqref{eq2} becomes
\[
\max\bigg\{\sigma_m^2, \, \frac{1}{m}\sum_{k\geq m}\sigma_k^2 \, \bigg\}
 \,\lesssim\, m^{-1} \log^{-2s+1} m
\]
and the stated inequality is obtained.
\end{proof}

\subsection{Proof of Corollary~\ref{cor:non-Hilbert}}

Using the same bound as in the proof of Corollary~\ref{cor:Hilbert}, 
the case $\alpha>1/2$ immediately follows from Theorem~\ref{thm:non-Hilbert}
if we choose $1/\alpha<p<2$, and the case $\alpha=1/2$, $\beta>1$ from Theorem~\ref{thm:non-Hilbert-boundary}
if we choose $1/2<s<\beta-1/2$.

All bounds are attained with the same classes $F$ as in Corollary~\ref{cor:Hilbert} for the first case,
and with the constructions from the next section for the two other cases. \\ \qed

\section{Examples} 
\label{sec:ex}

We first apply 
Theorem~\ref{main_theorem} 
to tensor product spaces.

\begin{example}\label{ex:tp}
Let $H$ be a RKHS on $D$ that is compactly embedded into $L_2$
and let $F$ be its unit ball.
We consider 
$L_2$-approximation on 
the unit ball $F_d$ of the $d$-fold tensor product $H_d$ of $H$,
which is a RKHS on the domain $D^d$.
We assume that $g_n(F) \lesssim n^{-\alpha}$ for some $\alpha>0$.
The famous Smolyak algorithm, see~\cite{Sm}, gives the estimate 
\begin{equation}\label{eq:smo}
 g_n(F_d) \,\lesssim\, n^{-\alpha} \log^{(\alpha+1)(d-1)} n.
\end{equation}
An example of such tensor product spaces are the spaces of dominating mixed smoothness $\alpha>1/2$, 
see~\cite{DTU}.
For these spaces, 
it is known that the error bound \eqref{eq:smo} for the Smolyak algorithm can be improved~\cite{SU};
the exponent of the logarithm can be reduced to $(\alpha+1/2)(d-1)$.
With Corollary~\ref{cor:Hilbert} and known results on the approximation numbers
of tensor product operators, see \cite{Bab,Mit}, we now obtain
\begin{equation}\label{eq:tp}
 g_n(F_d) \,\lesssim\, n^{-\alpha} \log^{\alpha(d-1)} n \quad \text{if } \ \alpha>1/2.
\end{equation}
This bound is asymptotically optimal for the spaces of mixed smoothness, 
see \cite[Theorem 1]{Tem92} or \cite[Theorem 6.4.3]{Tem18}. 
More generally,
it is known that $d_n(F) \asymp n^{-\alpha}$
implies $d_n(F_d) \asymp n^{-\alpha} \log^{\alpha(d-1)} n$ (see e.g.\ \cite{K18})
and therefore the asymptotic bound~\eqref{eq:tp} 
is optimal whenever 
the approximation numbers in the univariate case are of order $n^{-\alpha}$.
Let us note, however, that
also preasymptotic estimates on the sampling numbers (say, for $n<d^d$)
are of interest, especially if the dimension $d$ is high,
see \cite{K18,KSU,WW95}. \\
\qed
\end{example}

We now present two examples that show that 
our upper bounds 
cannot be improved without further assumptions 
on the class $F$.

First, we show that the worst possible behavior of the sampling numbers
in the case $d_n(F) \lesssim n^{-1/2} \log^{-\beta} n$ with $\beta>1$
is indeed $n^{-1/2} \log^{-\beta+1} n$.

\begin{example}\label{ex1}
For $\ell\in\N_0$ and $k\in\{1,\hdots,2^\ell\}$ define the interval $I_{\ell,k} = [(k-1)2^{-\ell},k2^{-\ell})$
and denote with $\chi_{\ell,k}$ the indicator function of $I_{\ell,k}$.
Let $\beta>1$.
We set
\[
 \mathcal C_\beta \,:=\, \bigg\{ \mathbf c := (c_{\ell,k})_{\ell\in\N_0,\,1\le k\le 2^\ell} 
 \,\Big\vert\, \sum_{k=1}^{2^\ell} |c_{\ell,k}|^2 \le (\ell+1)^{-2\beta} \text{ for all } \ell\in\N_0 \bigg\} 
\]
and consider the class
\[
 F_\beta \,:=\, \bigg\{ f_{\mathbf c} := \sum_{\ell\in\N_0} \sum_{k=1}^{2^\ell} c_{\ell,k} \chi_{\ell,k}  
 \,\Big\vert\, \mathbf c \in \mathcal C_\beta  \bigg\}.
\]
Note that the series $f_{\mathbf c}$ converge uniformly, since the inner sum is bounded by $(\ell+1)^{-\beta}$.
If $F_\beta$ is equipped with the maximum distance on $[0,1)$,
it is a separable 
metric space,
function evaluation is continuous,
and the embedding in $L_2([0,1))$ is continuous.

For every $L\in\N_0$, the span $V_L$ of the functions  $\chi_{\ell,k}$ with $\ell \le L$
has dimension $2^{L}$.
If $P_L$ is the $L_2$-orthogonal projection onto $V_L$, we have for all $\mathbf c \in \mathcal C_\beta$ that
\begin{multline*}
 \big\Vert f_{\mathbf c} - P_L f_{\mathbf c}\big\Vert_2
 \le \Big\Vert \sum_{(\ell,k) \colon \ell > L} c_{\ell,k} \chi_{\ell,k} \Big\Vert_2
 \le \sum_{\ell > L} \Big\Vert \sum_{k=1}^{2^\ell} c_{\ell,k} \chi_{\ell,k} \Big\Vert_2 \\
 = \sum_{\ell > L} \bigg(\sum_{k=1}^{2^\ell} c_{\ell,k}^2 \Vert\chi_{\ell,k}\Vert_2^2 \bigg)^{1/2}
 \le \sum_{\ell > L} 2^{-\ell/2} (\ell+1)^{-\beta}
 \lesssim 2^{-L/2} L^{-\beta},
\end{multline*}
and thus
\[
  d_{2^L}(F_\beta) \,\lesssim\, 2^{-L/2} L^{-\beta},
\]
or equivalently
\[
  d_n(F_\beta) \,\lesssim\, n^{-1/2} \log^{-\beta} n.
\]
We now show a lower bound for the sampling numbers.
Let $x_1,\hdots,x_n\in [0,1)$.
For all $\ell\in\N_0$, we let $J_\ell$ be the set of indices $1\le k\le 2^{\ell}$ such that $I_{\ell,k}$ contains at least one of these points.
Clearly, the cardinality of $J_\ell$ is at most $n$.
We choose $L\in \N_0$ of order $\log n$ and define
\[
 f_L \,:=\, \sum_{\ell>L}  |J_\ell|^{-1/2} (\ell+1)^{-\beta}\sum_{k\in J_\ell} \chi_{\ell,k}. 
\]
This function is contained in $F_\beta$
and for all $i\le n$, we have
\[
 h \,:=\, f_L(x_i) 
 \,=\, \sum_{\ell>L} |J_\ell|^{-1/2} (\ell+1)^{-\beta}
 \,\gtrsim\, n^{-1/2} \log^{-\beta+1} n,
\]
where $h$ is independent of $i$.
On the other hand, as shown by our previous calculation,
\[
 \Big|\int_0^1 f_L(x)\,dx\Big| \,\le\, \big\Vert f_L \big\Vert_2
 \,\lesssim\, 2^{-L/2} L^{-\beta}
 \,\lesssim\, n^{-1/2} \log^{-\beta} n.
\]
Thus, if we set $f = h - f_L$, the function is contained in $F_\beta$,
vanishes at all points $x_1,\hdots,x_n$,
and satisfies
\[
 \Vert f \Vert_2
 \,\ge\,
 \int_0^1 f(x)\,dx \,\ge\, h - \Big|\int_0^1 f_L(x)\,dx\Big|
 \,\gtrsim\, n^{-1/2} \log^{-\beta+1} n.
\]
This shows $g_n(F_\beta)\gtrsim n^{-1/2} \log^{-\beta+1} n$. \\ \qed
\end{example}

The next example shows that, in the case $d_n(F) \lesssim n^{-1/2} \log^{-\beta} n$
with $\beta\le 1$, no general statement on the sampling numbers is possible.

\begin{example}\label{ex2}
Similar to Example~\ref{ex1}, we define
\[
 \mathcal C \,:=\, \bigg\{ \mathbf c 
 \,\,\Big\vert\, \sum_{k=1}^{2^\ell} |c_{\ell,k}|^2 \le (\ell+1)^{-2} \log(\ell+e)^{-2} \text{ for all } \ell\in\N_0 \bigg\} 
\]
and consider the class
\[
 F \,:=\, \bigg\{ f_{\mathbf c} := \sum_{\ell\in\N_0} \sum_{k=1}^{2^\ell} c_{\ell,k} \chi_{\ell,k}  
 \,\Big\vert\, \mathbf c \in \mathcal C, \, \mathbf c \text{ finite}  \bigg\}.
\]
The finiteness of the sequences ensures that
$F$, equipped with the maximum distance, is still a separable metric space, 
where function evaluation is continuous,
and the embedding in $L_2([0,1))$ is continuous.
As above, we obtain
\[
  d_n(F) \,\lesssim\, n^{-1/2} (\log n)^{-1} (\log\log n)^{-1}.
\]
In particular, we have $(d_n(F) \log^{1/2} n) \in \ell_2$.
On the other hand, given $x_1,\hdots,x_n$ and $\varepsilon>0$, 
we choose $L\in\N_0$ with
\[
\sum_{\ell > L} 2^{-\ell/2} (\ell+1)^{-1} (\log(\ell+e))^{-1} \,\le\, \varepsilon,
\]
define the sets $J_\ell$ as above,
and choose $N\in\N_0$ such that
\[
 h \,:=\, 
 \sum_{\ell=L+1}^N |J_\ell|^{-1/2}(\ell+1)^{-1} (\log(\ell+e))^{-1} \,\ge\, 1.
\] 
The function 
\[
 f_L \,:=\, \frac{1}{h} \sum_{\ell=L+1}^{N} |J_\ell|^{-1/2} (\ell+1)^{-1} (\log(\ell+e))^{-1} \sum_{k\in J_\ell} \chi_{\ell,k},
\]
is contained in $F$, its integral 
is at most $\varepsilon$, 
and it satisfies $f_L(x_i)=1$ for all $i\le n$.
Then $f = 1 - f_L$ is contained in $F$,
vanishes at all points $x_1,\hdots,x_n$,
and satisfies
\[
 \Vert f \Vert_2
 \,\ge\,
 \int_0^1 f(x)\,dx \,\ge\, 1 - \Big|\int_0^1 f_L(x)\,dx\Big|
 \,\ge\, 1-\varepsilon.
\]
This shows $g_n(F)\ge 1$ for all $n\in\N_0$.\\ \qed
\end{example}

We note that the lower bounds in Example~\ref{ex1} and \ref{ex2}
already hold for the easier problem of numerical integration on $F_\beta$.
Thus, the upper bounds from Corollary~\ref{cor:non-Hilbert}
are also sharp for the minimal error of quadrature rules on probability spaces.


\end{document}